\newcommand{\argmin}{\mathop{\rm argmin}\limits}
\newcommand{\drv}[2]{\frac{\rmd#1}{\rmd#2}}
\newcommand{\calC}{{\mathcal C}}
\newcommand{\calE}{{\mathcal E}}
\newcommand{\bbN}{{\mathbb N}}
\newcommand{\bbR}{{\mathbb R}}
\newcommand{\rmd}{{\mathrm d}}
\newcommand{\tilr}{{\tilde r}}
\newcommand{\tilA}{{\tilde A}}
\newcommand{\tilV}{{\tilde V}}
\newcommand{\tilU}{{\tilde U}}
\newtheorem{theorem}{Theorem}
\newtheorem{proposition}{Proposition}
\newtheorem{example}{Example}%
\newtheorem{remark}{Remark}
\newtheorem{assumption}{Assumption}
\newtheorem{lemma}{Lemma}
\numberwithin{equation}{section}
\title{Convergence Analysis of the Upwind Difference Methods for Hamilton--Jacobi--Bellman Equations}
\date{}
\author[1]{Daisuke Inoue}
\author[1]{Yuji Ito}
\author[2]{Takahito Kashiwabara}
\author[2]{Norikazu Saito}
\author[1]{Hiroaki Yoshida}
\affil[1]{Toyota Central R\&D Labs., Inc., 41-1, Nagakute, Aichi 480-1192, Japan.}
\affil[2]{Graduate School of Mathematical Sciences, The University of Tokyo, 3-8-1 Komaba, Meguro-ku, Tokyo 153-8914, Japan.}
\begin{document}

\maketitle

\abstract{
  This paper investigates the convergence properties of the upwind difference scheme for the Hamilton--Jacobi--Bellman (HJB) equation, a central partial differential equation in optimal control theory.
  First, assuming the existence of a classical solution, we show that the numerical solution converges to the true solution with a first-order rate with respect to the time step.
  This result complements the square-root rate established in previous studies for viscosity solutions.
  Second, by exploiting the correspondence between HJB equations and conservation laws, we prove the convergence of the optimal control input.
  This analysis is crucial for practical applications where the control input is the primary quantity of interest, yet it has rarely been addressed in previous studies.
  Finally, we confirm the validity of our theoretical results through numerical experiments on typical control problems.
}

\section{Introduction}

Optimal control problems seek a control input that minimizes a cost functional along the trajectory of a dynamical system~\cite{KirkOptimal2004}.
Their solutions are characterized by the \emph{Hamilton--Jacobi--Bellman (HJB) equation}, a first-order nonlinear partial differential equation (PDE) involving a pointwise optimization problem.
The minimizer of this optimization corresponds to the optimal control input, while the solution of the HJB equation represents the associated value function.
Various numerical schemes have been proposed, such as finite difference methods~\cite{Osher1991HighOrder,Wang2000upwind,Briggs2002FiniteDifference,Fleming2006Controlled,Shu2007High,Bokanowski2010Convergence,Sun2015Convergence}, finite element methods~\cite{Milner1996Mixed,Hu1999Discontinuous,Cheng2007discontinuous,Li2010Central,Jensen2013Convergencea}, and semi-Lagrange methods~\cite{FalconeApproximation1999,Falcone2002SemiLagrangian,Bokanowski2013Adaptive,Falcone2013SemiLagrangian}.
These studies also investigate the mathematical properties, such as their stability and convergence properties.
As a general result, \cite{Crandall1984Two,Barles1991Convergence} showed that numerical solutions converge to the viscosity solution of the PDE for difference schemes satisfying monotonicity and consistency with the convergence rate of $O(\sqrt{\Delta t})$, where $\Delta t$ is the time discretization width.
For individual schemes, their properties have also been studied; for example, \cite{Wang2000upwind} established the stability of the upwind difference scheme, and \cite{Sun2015Convergence} proved its convergence to the viscosity solution.
These specific studies are motivated by the practical difficulty of verifying monotonicity and consistency conditions for specific problems.

Despite these theoretical advances, a gap remains between the standard convergence theory and practical requirements.
Two specific issues warrant further investigation.
The first concerns the convergence rate under additional regularity assumptions on the solution.
The standard convergence theory is formulated in the viscosity solution framework and provides an $O(\sqrt{\Delta t})$ rate.
In practice, however, solutions are often reasonably smooth, in which case improved accuracy beyond the viscosity setting is expected.
In such cases, a refined analysis is necessary to obtain a more accurate characterization of the scheme's efficiency.
The second and more important issue is the convergence of the optimal control input function.
In real-world applications, the primary objective is to implement the optimal control input, rather than merely to evaluate the optimal cost value.
However, most existing studies focus on the convergence of the value function and do not fully address the convergence of the control input.
As a result, theoretical guarantees for the convergence of numerical control inputs to the true optimal control remain to be established.

In this paper, we address these two challenges by providing a refined convergence analysis for the upwind difference scheme, chosen for its simplicity, stability, and ease of implementation.
Since this scheme satisfies the monotonicity and consistency properties, the convergence to the viscosity solution with a rate of $O(\sqrt{\Delta t})$ is a direct consequence of the general theory in \cite{Crandall1984Two}.
However, motivated by the issues discussed above, we conduct the following two analyses.
First, we show the convergence of the value function with a rate of $O(\Delta t^\eta + \Delta x^\eta)$ under the assumption that the solution is of class $\calC^{1+\eta}$.
This analysis employs standard error estimates based on Taylor's theorem.
Although assuming the existence of a classical solution is stronger than the standard framework, it provides a theoretical justification for the first-order accuracy consistently observed in practice.
Second, we prove the convergence of the numerical control input.
To do this, we utilize the correspondence between the spatial derivative of the HJB equation and scalar conservation laws.
We apply the stability results for conservation law schemes in \cite{Crandall1980Monotone} to our setting to derive a sufficient condition for $L^1$ convergence of the spatial derivative of the value function.
Based on this, we establish the \emph{epi-convergence} property, which guarantees that the numerical control input converges to the true optimal input.
This result is indispensable for ensuring the reliability of control systems designed via HJB equations in real-world applications.

The rest of this paper is organized as follows.
In Section~\ref{sec:review}, we recall the HJB formulation for optimal control problems and describe the upwind difference scheme.
In Section~\ref{sec:convergence}, we state the main convergence results for both the value function and the control input.
Section~\ref{sec:proof} provides detailed proofs of these results.
Section~\ref{sec:numeric} reports numerical experiments that support the theoretical claims.
Finally, Section~\ref{sec:conclusion} summarizes the findings and discusses future work.

\textbf{Notation }
The representations $\partial_x$, $\partial_x\cdot$, and $\partial_{xx}$ denote the gradient operator, divergence operator, and Hessian operator in any dimension, respectively.
The symbol $\Omega$ denotes a bounded closed set and $T>0$ is a fixed positive real number.
The set $\calC^{n}(\Omega)$ is the set of functions on $\Omega$ that are $n$-times continuously differentiable.

\section{Review on Hamilton--Jacobi--Bellman Equation and Upwind Difference Scheme}\label{sec:review}

In this section, we review the optimal control problem and the corresponding HJB equation.
Then, we set up the upwind difference scheme.

\subsection{HJB Equation}\label{sec:hjb}

We define the optimal control problem and provide a formal derivation of the HJB equation.
A more precise derivation can be found in \cite{Fleming2006Controlled}.
First, we consider the following dynamical system:
\begin{align}\label{eq:ODE_system}
  \begin{dcases}
    \drv{X}{s}(s)=f(X(s), a(s)) \ \ s \in[t, T] \\
    X(t) = x,
  \end{dcases}
\end{align}
where $s\in [t,T]$ with $t\ge 0$ denotes time, $T$ denotes the terminal time, $X(s)\in\bbR^n$ denotes the system state, and $a(s)\in E$ denotes the control input, where $E\subseteq \bbR^m$ is a bounded closed set.
The function $f:\bbR^n\times E\to\bbR^n$ denotes the system dynamics.
The set of admissible input functions $\calE$ is defined as
\begin{align}
  \calE\coloneqq L^\infty([0,T],E),
\end{align}
which is the set of bounded, Lebesgue measurable functions with values in $E$ on $[0,T]$.
Next, we consider the following cost function for the system \eqref{eq:ODE_system}:
\begin{align}\label{eq:cost_func}
  J(a;x,t) = \int_{t}^{T} g(X(s), a(s)) \mathrm{d} s + v_T (X(T)),
\end{align}
where the function $g:\bbR^n\times E\to\bbR$ denotes the running cost, and $v_T:\bbR^n\to\bbR$ denotes the terminal cost.
Here, $X$ in \eqref{eq:cost_func} denotes the state trajectory following the dynamics \eqref{eq:ODE_system} under the control input $a$.
The optimal control problem seeks minimizers $a^*\in\calE$ to the cost function $J(a;x,0)$:
\begin{align}
  J(a^*;x,0) = \inf_{a\in\calE} J(a;x,0).
\end{align}

The solution to the optimal control problem is characterized by the HJB equation.
We first define the \emph{value function}:
\begin{align}\label{eq:value_function}
  v(x,t) \coloneqq\inf_{a\in\calE} J(a;x,t).
\end{align}
The value function satisfies the following recursive property: for any $r\in[t,T]$,
\begin{align}\label{eq:dynamic_principle}
  v(x,t) = \inf_{a\in \calE}\qty[\int_{t}^{r} g(X(s),a(s))\rmd s + v\qty(X(r),r) ],
\end{align}
where $X$ follows the dynamics \eqref{eq:ODE_system} under the control input $a$.
This equation is called the \emph{dynamic programming principle}.
Setting $r= t+h$ for $h>0$, dividing both sides by $h$, and taking the limit as $h\to 0$, we can derive the HJB equation~\cite{Fleming2006Controlled}:
\begin{align}
  \begin{split}\label{eq:HJB}
    \partial_t v(x,t) & = -\inf_{a\in E} \left[f(x,a)^\top \partial_x v(x,t) + g(x,a)\right], \\
    v(x,T)            & = v_T (x),
  \end{split}
\end{align}
which is a first-order nonlinear partial differential equation with the terminal condition.

The HJB equation does not necessarily have a classical solution.
Instead, a suitable weak solution called \emph{viscosity solution} is considered.
The function $v$ is said to be a viscosity solution if $v$ is continuous on $\Omega \times [0,T]$ and satisfies both of the following:
\begin{enumerate}
  \item Viscosity subsolution: For any smooth function $\phi$,
        $$
          -\partial_t \phi(\hat x,\hat t) \le \inf_{a\in E}\{\partial_x \phi(\hat x,\hat t)f(\hat x,a) + g(\hat x,a)\}
        $$
        at every $(\hat x,\hat t)$ which is a local maximizer of $v-\phi$.
  \item Viscosity supersolution: For any smooth function $\phi$,
        $$
          -\partial_t \phi(\hat x,\hat t) \ge \inf_{a\in E}\{\partial_x \phi(\hat x,\hat t)f(\hat x,a) + g(\hat x,a)\}
        $$
        at every $(\hat x,\hat t)$ which is a local minimizer of $v-\phi$.
\end{enumerate}

The value function $v$ coincides with the viscosity solution to the HJB equation when the following assumptions are satisfied~\cite{Fleming2006Controlled}.
\begin{assumption}\label{assmp:HJB}
  \begin{enumerate}
    \item The control set $E\subset\bbR^m$ is nonempty, bounded and closed.
    \item For every $a\in E$, $f(\cdot,a)$ is of class $\calC^1(\bbR^n)$.
          For every $x\in\bbR^n$, $f(x,\cdot)$ is of class $\calC^0(\bbR^m)$ on $E$.
          There exists $C>0$ such that for every $a\in E$ and $x\in\bbR^n$,
          \begin{align}
             & |f(x,a)|\le C(1+|x|+|a|),
            \\
             & |\partial_x f(x,a)|\le C.
          \end{align}
    \item For every $a\in E$, $g(\cdot,a)$ is of class $\calC^1(\bbR^n)$.
          For every $x\in\bbR^n$, $g(x,\cdot)$ is of class $\calC^0(\bbR^m)$ on $E$.
          There exists $C>0$ and $\ell>0$ such that for every $a\in E$ and $x\in\bbR^n$,
          \begin{align}
             & |g(x, a)| \leq C\left(1+|x|^{\ell}+|a|^{\ell}\right), \\
             & g(x, a) \ge -C.
          \end{align}
    \item The function $v_T$ is of class $\calC^1(\bbR^n)$.
          There exist $C>0$ and $\ell>0$ such that for every $x\in\bbR^n$,
          \begin{align}
             & v_T(x) \leq C\left(1+|x|^{\ell}\right), \\
             & v_T(x)\ge -C.
          \end{align}
  \end{enumerate}
\end{assumption}

\begin{proposition}[Theorem~7.1 and Theorem~9.1 of chapter 2 in \cite{Fleming2006Controlled}]
  Under Assumption \ref{assmp:HJB}, the HJB equation \eqref{eq:HJB} has a unique viscosity solution.
  In addition, this solution coincides with the value function defined in \eqref{eq:value_function}.
\end{proposition}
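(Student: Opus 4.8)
The plan is to prove the proposition in two independent halves: an \emph{existence} part, in which I show that the value function $v$ of \eqref{eq:value_function} is itself a viscosity solution of \eqref{eq:HJB}, and a \emph{uniqueness} part, in which I establish a comparison principle that forces any two viscosity solutions to coincide. The existence half will rely directly on the dynamic programming principle \eqref{eq:dynamic_principle}, which is already available; the uniqueness half will rely on the Lipschitz structure of the Hamiltonian $H(x,p) \coloneqq \min_{a\in E}\{f(x,a)^\top p + g(x,a)\}$ inherited from Assumption~\ref{assump:HJB}.

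For existence, I would first verify the continuity required by the definition by showing that $v$ is in fact locally Lipschitz on $\Omega\times[0,T]$. This follows from Assumption~\ref{assump:HJB}: the sublinear growth and the bound $|\partial_x f|\le C$ yield, through a Gr\"onwall estimate on the flow of \eqref{eq:ODE_system}, control on $|X(s)|$ and on the dependence of $X(s)$ upon its initial datum $x$; combined with the Lipschitz bounds on $g$ and $v_T$, these propagate to Lipschitz continuity of $J(a;\cdot,\cdot)$ uniformly in $a\in\calE$, and the infimum preserves this bound. I would then extract the viscosity inequalities from \eqref{eq:dynamic_principle}. Given a smooth $\phi$ and a local maximizer $(\hat x,\hat t)$ of $v-\phi$, normalized so that $v(\hat x,\hat t)=\phi(\hat x,\hat t)$, I fix an arbitrary constant control $a\in E$, set $r=\hat t+h$ in \eqref{eq:dynamic_principle}, and use $v\le\phi$ near $(\hat x,\hat t)$ to obtain $\phi(\hat x,\hat t)-\phi(X(\hat t+h),\hat t+h)\le \int_{\hat t}^{\hat t+h} g(X(s),a)\,\rmd s$. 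Writing the left side as $-\int_{\hat t}^{\hat t+h}\bigl[\partial_t\phi+\partial_x\phi\cdot f(X(s),a)\bigr]\rmd s$, dividing by $h$, and letting $h\to0$ gives $-\partial_t\phi(\hat x,\hat t)\le \partial_x\phi(\hat x,\hat t)\cdot f(\hat x,a)+g(\hat x,a)$ for every $a\in E$; taking the infimum over $a$ yields the subsolution inequality. The supersolution inequality follows symmetrically at a local minimizer, using a near-optimal control in the infimum side of \eqref{eq:dynamic_principle}.

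The hard part will be uniqueness, which I would derive from a comparison principle: any viscosity subsolution $u$ and supersolution $w$ with $u(\cdot,T)\le w(\cdot,T)$ satisfy $u\le w$ throughout. The standard route is the doubling-of-variables technique. After reversing time to turn \eqref{eq:HJB} into an initial-value problem, one studies the penalized functional $\Psi(x,y,t)=u(x,t)-w(y,t)-|x-y|^2/(2\varepsilon)$ augmented by small lower-order terms that force an interior maximum, locates a maximizer $(x_\varepsilon,y_\varepsilon,t_\varepsilon)$, and feeds the two test functions generated by the penalization into the sub- and supersolution inequalities. The decisive ingredients are the estimate $|x_\varepsilon-y_\varepsilon|^2/\varepsilon\to0$ together with the structure condition $|H(x,p)-H(y,p)|\le C|x-y|(1+|p|)$, which is exactly what the bounds $|\partial_x f|\le C$ and $|\partial_x g|\le C$ guarantee (on the bounded set $\Omega$ the field $f$ is bounded and the minimum over the compact set $E$ is Lipschitz in its arguments, giving also Lipschitz dependence of $H$ on $p$). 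Subtracting the two viscosity inequalities and invoking this modulus produces an inequality that, on sending $\varepsilon\to0$, is contradictory unless $u\le w$.

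I expect the comparison argument to be the principal obstacle, both because the doubling-of-variables bookkeeping at the penalized maximizer is delicate and because one must check carefully that $H$ inherits the required modulus of continuity from Assumption~\ref{assump:HJB}. Once comparison is in hand, applying it with the roles of subsolution and supersolution exchanged between any two viscosity solutions yields equality, establishing uniqueness; the existence half already identifies this unique solution with the value function $v$, completing the proof.
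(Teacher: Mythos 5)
The paper does not prove this proposition at all: it is imported verbatim from the cited reference (Theorem~2.7.1 of Fleming--Soner), so there is no internal proof to compare against. Your two-part outline --- the dynamic programming principle yielding the viscosity sub/supersolution inequalities for the value function (with Lipschitz continuity from a Gr\"onwall estimate), plus a doubling-of-variables comparison principle exploiting the modulus $|H(x,p)-H(y,p)|\le C|x-y|(1+|p|)$ for uniqueness --- is exactly the standard argument used in that reference, and it is correct at the level of a sketch.
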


\begin{remark}\label{rem:optimal_control_existence}
  Under additional regularity assumptions on the solution, the pointwise minimizer on the right-hand side of the HJB equation induces optimal feedback control.
  We refer to Theorem~5.1 of Chapter~1 in \cite{Fleming2006Controlled} for a precise formulation.
\end{remark}

\subsection{Upwind Difference Scheme}\label{sec:scheme}

We introduce the upwind finite difference scheme for the HJB equation \eqref{eq:HJB}.
We focus on the one-dimensional case ($n=1$), which allows us to establish control input convergence by exploiting the correspondence with scalar conservation laws.
Specifically, we assume $x\in\Omega=[-1,1]$ and impose the periodic boundary condition.
First, we discretize space $x$ and time $t$ as $x_i = i\Delta x\ (i=-N_x,\ldots,N_x)$ and $t_j = j\Delta t\ (j=0,\ldots,N_t)$,
where we have defined $\Delta t\coloneqq T/N_t,\ \Delta x\coloneqq 1/N_x$ with partition numbers $N_x>0,\ N_t>0$.

Next, we discretize the functions $v$ and $a$ which consist of the HJB equation.
The approximate value of $v(x_i,t_j)$ on a grid point is denoted by $V_{i,j}$, and the approximate value of $a(x_i,t_j)$ is denoted by $A_{i,j}$.
We write $f(A_{i,j})$ and $g(A_{i,j})$ for $f(x_{i},A_{i,j})$ and $g(x_{i},A_{i,j})$, respectively.
Then, the upwind difference method for the HJB equation \eqref{eq:HJB} is defined as
\begin{align}
   & \frac{V_{i,j-1}-V_{i,j}}{\Delta t} = f^+(A_{i,j}^{*}) D^+V_{i,j}+f^{-}(A_{i,j}^{*}) D^-V_{i,j}+ g(A_{i,j}^{*}),\label{eq:upwind_HJB}            \\
   & A_{i,j}^{*} \in \argmin_{A_{i,j} \in E}\left(f^{+}(A_{i,j}) D^+V_{i,j}+f^{-}(A_{i,j}) D^-V_{i,j}+ g(A_{i,j})\right),\label{eq:upwind_HJB_input} \\
   & V_{i,N_t} = v_T (x_i),\label{eq:upwind_HJB_initial_value}
\end{align}
where we have defined $f^{+}(A_{i,j})\coloneqq\max(f(A_{i,j}),0)$, $f^{-}(A_{i,j})\coloneqq\min(f(A_{i,j}),0)$, $D^+V_{i,j}\coloneqq (V_{i+1,j}-V_{i,j})/ \Delta x$ and $D^-V_{i,j}\coloneqq (V_{i,j}-V_{i-1,j}) / \Delta x $.
Equation \eqref{eq:upwind_HJB} can be equivalently written in the following form:
\begin{align}
  \begin{aligned}
    V_{i,j-1} & =\left(1- \alpha\left|f(A_{i,j}^{*})\right|\right) V_{i,j}                                                                                \\
              & \quad + \alpha f^{+}(A_{i,j}^{*}) V_{i+1,j} - \alpha f^{-}(A_{i,j}^{*}) V_{i-1,j} +\Delta t g(A_{i,j}^{*}),\label{eq:upwind_HJB_explicit}
  \end{aligned}
\end{align}
where we have defined
\begin{align}
  \alpha\coloneqq\frac{\Delta t}{\Delta x}.
\end{align}
These are explicit schemes, where the solution is obtained by alternately solving for the optimization step \eqref{eq:upwind_HJB_input} and the time evolution step \eqref{eq:upwind_HJB}.

\begin{remark}
  The scheme \eqref{eq:upwind_HJB}--\eqref{eq:upwind_HJB_input} is consistent and monotone in the sense of \cite{Crandall1984Two}.
  Indeed, the monotonicity is evident from the coefficients of the difference equation \eqref{eq:upwind_HJB_explicit} under the Courant--Friedrichs--Lewy (CFL) condition.
  Therefore, convergence to the viscosity solution is guaranteed by the general theory.
  We note that the upwind scheme introduced in \cite{Crandall1984Two} differs in definition from the one considered here.
  Specifically, their scheme requires a priori knowledge of a point $\alpha_0$ where the Hamiltonian's gradient vanishes, or a priori bounds on $\partial_x v$ for the CFL condition.
  In contrast, our scheme is constructed directly from the functions $f$ and $g$, and allows for a straightforward monotonicity check without such a priori information.
\end{remark}

\begin{remark}
  Under Assumption \ref{assmp:HJB}, both the pointwise minimization problem in the HJB equation \eqref{eq:HJB} and the discrete minimization problem in the upwind scheme \eqref{eq:upwind_HJB_input} admit minimizers.
  For each $(x,p)\in\Omega\times\bbR$, the map $a\mapsto f(x,a)p+g(x,a)$ is continuous on the compact set $E$, and hence attains its minimum by the Weierstrass theorem.
  Therefore, the minimizer $a^*$ exists.
  Similarly, for given $(x,p^+,p^-)$, the discrete objective function
  $f^+(x,a)p^+ + f^-(x,a)p^- + g(x,a)$
  is continuous in $a$ on $E$, since $f^\pm$ are obtained from $f$ by
  composition with continuous functions.
  Consequently, the minimizer $A_{i,j}^*$ also exists.
\end{remark}

\begin{remark}
  Even if the pointwise minimizer in the HJB equation is unique, the minimizer of the discrete upwind problem \eqref{eq:upwind_HJB_input}
  need not be unique.
  For example, let $E=[-1,1]$, $f(x,a)=a$, and $g(x,a)=a^2$.
  Then the objective function $a\mapsto ap+a^2$ is strictly convex, and hence admits a
  unique minimizer for every $p\in\bbR$.
  However, the discrete objective function
  \begin{align}
    H(a) & = f^+(a)p^+ + f^-(a)p^-+a^2
  \end{align}
  can have multiple minimizers.
  Indeed, if $p^+=-2$ and $p^-=2$, then $H(a)$ is minimized both at $a=1$ and $a=-1$.
\end{remark}

In the next section, we present the convergence analysis results for this scheme.

\section{Convergence Results}\label{sec:convergence}

In this section, we present the main convergence results for the difference scheme in \eqref{eq:upwind_HJB}--\eqref{eq:upwind_HJB_initial_value}.
First, we consider the case where the HJB equation has a classical solution.
Then, we can show a first-order convergence rate with respect to the discretization width.

\begin{theorem}\label{thm:convergence}
  Suppose that Assumption \ref{assmp:HJB} holds.
  Suppose also that the following CFL condition is satisfied:
  \begin{align}\label{eq:CFL}
    \alpha\sup_{x\in\Omega,a\in E} |f(x,a)| < 1.
  \end{align}
  Furthermore, suppose that the solution $v(x,t)$ of \eqref{eq:HJB} is of class $\calC^{1+\eta}(\Omega\times [0,T])$ for some $\eta \in (0,1]$; that is, $\partial_t v$, $\partial_x v$ are H\"{o}lder continuous in $t$ and $x$ with exponent $\eta$, respectively.
  Then, there exist $C_t$ and $C_x$ such that for any $(\Delta x, \Delta t)$ satisfying \eqref{eq:CFL}, the following holds:
  \begin{align}
    \sup_{i,j} |V_{i,j} - v(x_i,t_j)| \le  C_t(\Delta t)^\eta + C_x(\Delta x)^\eta.
  \end{align}
\end{theorem}

In this theorem, the convergence rate is stated under additional regularity assumptions on the solution, beyond those required in the standard viscosity-solution framework.
This formulation is consistent with the viscosity theory,
while allowing for faster convergence in situations where the solution is smoother.
This rate holds for meshes that respect the ratio constraint in \eqref{eq:CFL}, i.e. when $\Delta t$ and $\Delta x$ are chosen within that CFL range.
Although the existence of a classical solution is not guaranteed in general,
this result provides a theoretical justification for the first-order accuracy
often observed in practice.
This behavior is numerically assessed in cases where the solution is smooth
(see Section~\ref{sec:numeric}).

Next, we show the convergence of the optimal input function.
To this end, we exploit the correspondence between conservation laws and HJB equations, which requires stronger assumptions than those in Theorem~\ref{thm:convergence}.
Here, we consider the scalar problem, that is $n=1$ and $m=1$.
\begin{assumption}\label{assmp:deriv}
  \begin{enumerate}
    \item For every $(x,p)\in\bbR\times\bbR$, the minimizer
          \begin{align}\label{eq:optimization_problem}
            a^*(x,p) \in \argmin_{a\in E} \left[f(x,a)^\top p + g(x,a)\right].
          \end{align}
          is unique.
    \item
          $f(\cdot,\cdot)$ is of class $\calC^3(\bbR\times E)$, $g(\cdot,\cdot)$ is of class $\calC^3(\bbR\times E)$, and $v_T$ is of class $\calC^2(\bbR)$.
          The minimizer $(x,p) \mapsto a^*(x,p)$ is of class $\calC^3(\bbR\times\bbR)$.
    \item
          There exists $X>0$ such that for all $x \in \{ x \in \bbR \mid |x|>X  \}$ and for all $p\in\bbR$,
          \begin{align}
             & \partial_x f(x,a^*(x,p))  = 0,\  \partial_x g(x,a^*(x,p)) =0,\ \partial_x a^*(x,p) = 0.
          \end{align}
    \item
          For any $h\in\bbR$, there exists $P_h\in\bbR$ such that for every $(x,p)\in\bbR\times\bbR$ satisfying
          \begin{align}
             & \left|f(x,a^*(x,p)) p + g(x,a^*(x,p))\right|<h,
          \end{align}
          we have
          \begin{align}
            |p|\le P_h.
          \end{align}
    \item For a.e. $x\in \bbR$, the set $S(p)$ has an empty interior.
          Here, $S(p)$ is defined as
          \begin{align}
             & S(p) = \{ p\mid \partial_{pp}  \{f(x,a^*(x,p)) p + g(x,a^*(x,p)) \}=0\}.
          \end{align}
          Note that the equation inside the set is rearranged to
          \begin{align}
            \begin{aligned}
               & \partial_p a^*(x,p)\left\{ 2\ \partial_a f(x,a^*(x,p)) \right\}                                         \\
               & + \{\partial_p a^*(x,p)\}^2 \left\{ \partial_{aa} f(x,a^*(x,p)) p + \partial_{aa} g(x,a^*(x,p))\right\} \\
               & + \partial_{pp} a^*(x,p) \left\{\partial_a f(x,a^*(x,p)) p + \partial_a g(x,a^*(x,p))\right\} = 0.
            \end{aligned}
          \end{align}
  \end{enumerate}
\end{assumption}
These assumptions are based on the assumptions in \cite{Colombo2023Conservation} rewritten to fit our notation.

\begin{example}
  Consider the typical optimal control problem where the dynamics and cost functions are first and second order for the control input, respectively:
  \begin{align}
    f(x,a) & = A(x) + B(x) a   \\
    g(x,a) & = Q(x) + R(x) a^2
  \end{align}
  By assuming appropriate differentiability for $A$, $B$, $Q$, and $R$, and $v_T$, Assumption \ref{assmp:HJB} is satisfied.
  Suppose that $A$ is of class $C^3(\bbR)$, $B$ is of class $C^3(\bbR)$, $Q$ is of class $C^3(\bbR)$, $B(x)\ne 0$ for all $x\in\bbR$, and $R(x)>0$ for all $x\in\bbR$.
  Then, Assumption \ref{assmp:deriv}-1, 2, and 5 are satisfied.
  Additionally, if $A$, $Q$, and $B^2/R$ are functions that are constant in areas far from the origin, then Assumption \ref{assmp:deriv}-3 is also satisfied.
  Furthermore, if $A$, $Q$, $R$, and $|1/B|$ are bounded, Assumption \ref{assmp:deriv}-4 is satisfied.
\end{example}

To state the convergence results, we define a piecewise constant extension $A^*_k (x,t)$, $(x,t)\in \Omega\times [0,T]$ of the discrete value $A^*_{i,j}$, that is
\begin{align}
  A^*_k(x,t) \coloneqq A^*_{i,j} \quad \text{for } (x,t) \in [x_{i-1/2}, x_{i+1/2}) \times [t_j,t_{j+1}),\label{eq:interpolated_input}
\end{align}
where we have defined $x_{i\pm 1 / 2}=x_{i}\pm \Delta x / 2$.
We index this function $A_k$ by the partition number $k=N_t$ and assume that the spatio-temporal partition numbers $N_x$ and $N_t$ are related in some fixed way so that the choice of $k$ defines a unique mesh.
The same procedure is applied for the value function $V$ and its spatial difference $D^\pm V$:
\begin{align}
  V_k (x,t) \coloneqq V_{i,j} \quad \text{for } (x,t) \in [x_{i-1/2}, x_{i+1/2}) \times [t_j,t_{j+1}), \\
  D^\pm V_k(x,t) \coloneqq D^\pm V_{i,j} \quad \text{for } (x,t) \in [x_{i-1/2}, x_{i+1/2}) \times [t_j,t_{j+1}),
\end{align}

Next theorem states that the input function $A^*_k$ obtained from the scheme converges to the input function $a^*$ obtained from the PDE.

\begin{theorem}\label{thm:convenience_input}
  Suppose that Assumptions \ref{assmp:HJB} and \ref{assmp:deriv} hold.
  Assume further that the solution of \eqref{eq:HJB} is locally Lipschitz in $x$ uniformly in $t$, and that the following \emph{modified CFL condition} is satisfied:
  \begin{align}\label{eq:modified_CFL}
    \alpha\sup_{x\in\Omega,a\in E} |f(x,a)| \le \frac{1}{2}.
  \end{align}
  Then, for a.e. $(x,t) \in \Omega\times [0,T]$, there exists a subsequence $\{\ell(k)\}_{k\in\bbN}$ such that
  for every $A^*_{\ell(k)}$ defined in \eqref{eq:upwind_HJB_input},
  \begin{align}\label{eq:convenience_input}
    \lim_{k\to\infty} A^*_{\ell(k)}(x,t) = a^*(x,t).
  \end{align}
  where
  $a^*(x,t)$ is the unique minimizer of \eqref{eq:HJB}:
  \begin{align}
    a^*(x,t) = \argmin_{a\in E}\left[f(x,a)\partial_x v(x,t)+g(x,a)\right].
  \end{align}
\end{theorem}

Note that the convergence of the value function $V_k$ does not automatically imply
the convergence of the minimizer $A^*_k$.
From an engineering perspective, it is therefore important to verify that numerical
schemes produce control inputs that converge in a meaningful sense.
To the best of our knowledge, such convergence results for discrete minimizers have
not been established for finite difference methods applied to the HJB equation.
Theorem~\ref{thm:convenience_input} addresses this issue in a one-dimensional setting,
although it does not provide explicit convergence rates.
The restriction to one spatial dimension arises from exploiting the correspondence
between the one-dimensional HJB equation and a conservation law.
Extending this line of analysis to higher dimensions or to rate estimates remains an
interesting direction for future research.

\section{Proof of Results}\label{sec:proof}

In this section, we present the proofs of the results stated above.

\subsection{Convergence of the Variable $v$}

We first prove the convergence of the value function.
The proof employs standard error analysis techniques using Taylor's theorem.
However, as discussed below, we must account for the fact that the minimizer at each time step differs between the discrete scheme and the PDE.
\begin{proof}[Proof of Theorem \ref{thm:convergence}]
  Setting $\epsilon_{i,j}\coloneqq V_{i,j} - v(x_i, t_j)$, $f(a_{i,j}^*)\coloneqq f(x_i,a^*(x_i,t_j))$, and $g(a_{i,j}^*)\coloneqq g(x_i,a^*(x_i,t_j))$, we have by
  \eqref{eq:HJB} and \eqref{eq:upwind_HJB}
  \begin{align}
    \begin{split}
       & \frac{\epsilon_{i,j-1} - \epsilon_{i,j}}{\Delta t}-f^{+}\left(A_{i,j}^*\right) \frac{\epsilon_{i+1,j}-\epsilon_{i,j}}{\Delta x}-f^{-}\left(A_{i,j}^*\right) \frac{\epsilon_{i,j}-\epsilon_{i-1,j}}{\Delta x} \\
       & = \frac{v(x_i,t_{j}) - v(x_i,t_{j-1})}{\Delta t}
      + f^+(A_{i,j}^*)\frac{v(x_{i+1},t_{j}) - v(x_i,t_j)}{\Delta x}                                                                                                                                                  \\
       & \quad
      + f^-(A_{i,j}^*)\frac{v(x_{i},t_{j}) - v(x_{i-1},t_j)}{\Delta x}
      + g(A_{i,j}^*)                                                                                                                                                                                                  \\
       & \quad + \left(
      - \partial_tv(x_i,t_j)
      - f(a_{i,j}^*)\partial_xv(x_i,t_j)
      -  g\left(a_{i,j}^*\right)\right)
      \label{eq:eps_1}                                                                                                                                                                                                \\
       & \le \frac{v(x_i,t_{j}) - v(x_i,t_{j-1})}{\Delta t}
      + f^+(a_{i,j}^*)\frac{v(x_{i+1},t_{j}) - v(x_i,t_j)}{\Delta x}                                                                                                                                                  \\
       & \quad
      + f^-(a_{i,j}^*)\frac{v(x_{i},t_{j}) - v(x_{i-1},t_j)}{\Delta x}
      + g(a_{i,j}^*)                                                                                                                                                                                                  \\
       & \quad + \left(
      - \partial_tv(x_i,t_j)
      - f(a_{i,j}^*)\partial_xv(x_i,t_j)
      -  g\left(a_{i,j}^*\right)\right)                                                                                                                                                                               \\
       & = r^1_{i,j} + r^2_{i,j}
    \end{split}
  \end{align}
  where we have defined
  \begin{align}
     & r^1_{i,j}\coloneqq
    \frac{v(x_i,t_{j}) - v(x_i,t_{j-1})}{\Delta t} -
    \partial_tv(x_i,t_j), \\
    \begin{split}
       & r^2_{i,j}\coloneqq f^+(a_{i,j}^*)\frac{v(x_{i+1},t_{j}) - v(x_i,t_j)}{\Delta x}+f^-(a_{i,j}^*)\frac{v(x_{i},t_{j}) - v(x_{i-1},t_j)}{\Delta x} \\
       & \quad\quad - f(a_{i,j}^*)\partial_xv(x_i,t_j).
    \end{split}
  \end{align}
  Note that we have replaced the minimizer $A^*$ of the scheme with the minimizer $a^*$ of the HJB equation so that the inequality holds.
  This is equivalently written as
  \begin{align}
    \begin{split}
      \epsilon_{i,j-1} & \le (1-\alpha |f(a_{i,j}^*)|)\epsilon_{i,j} \\
                       & \quad
      +\alpha f^+(a_{i,j}^*)\epsilon_{i+1,j}
      -\alpha f^-(a_{i,j}^*)\epsilon_{i-1,j}
      +\Delta t (r^1_{i,j}+r^2_{i,j}).
    \end{split}
  \end{align}
  Consequently, setting $E_j\coloneqq\max_{i}|\epsilon_{i,j}|$, we have
  \begin{align}
    \begin{split}
      \epsilon_{i,j-1}
       & \le \left| 1-\alpha |f(a_{i,j}^*)|\right||\epsilon_{i,j}| \\
       & \quad
      +\alpha f^+(a_{i,j}^*)|\epsilon_{i+1,j}|
      -\alpha f^-(a_{i,j}^*)|\epsilon_{i-1,j}|
      +\Delta t (r^1_{i,j}+r^2_{i,j})                              \\
       & \le E_{j}+\Delta t (r^1_{i,j}+r^2_{i,j}).
    \end{split}\label{eq:plus_eps}
  \end{align}
  Similarly, in \eqref{eq:eps_1}, since $a^*$ is a minimizer of the right-hand side of \eqref{eq:HJB}, we have
  \begin{align}
    \begin{split}
       & \frac{\epsilon_{i,j-1} - \epsilon_{i,j}}{\Delta t}-f^{+}\left(A_{i,j}^*\right) \frac{\epsilon_{i+1,j}-\epsilon_{i,j}}{\Delta x}-f^{-}\left(A_{i,j}^*\right) \frac{\epsilon_{i,j}-\epsilon_{i-1,j}}{\Delta x} \\
       & \ge
      \frac{v(x_i,t_{j}) - v(x_i,t_{j-1})}{\Delta t} -
      \partial_tv(x_i,t_j)
      \\
       & = r^1_{i,j} + \tilr^2_{i,j}.
    \end{split}
  \end{align}
  where we have defined
  \begin{align}
    \begin{split}
      \tilr^2_{i,j} & \coloneqq f^+(A_{i,j}^*)\frac{v(x_{i+1},t_{j}) - v(x_i,t_j)}{\Delta x}
      +f^-(A_{i,j}^*)\frac{v(x_{i},t_{j}) - v(x_{i-1},t_j)}{\Delta x}                        \\
                    & \quad -
      f(A_{i,j}^*)\partial_xv(x_i,t_j).
    \end{split}
  \end{align}
  from which we obtain
  \begin{align}
    \begin{split}
      -\epsilon_{i,j-1}
       & \le \left| 1-\alpha |f(A_{i,j}^*)| \right| |\epsilon_{i,j}| \\
       & \quad
      +\alpha f^+(A_{i,j}^*)|\epsilon_{i+1,j}|
      -\alpha f^-(A_{i,j}^*)|\epsilon_{i-1,j}|
      -\Delta t (r^1_{i,j}+\tilr^2_{i,j})                            \\
       & \le E_{j} - \Delta t (r^1_{i,j}+\tilr^2_{i,j}).
    \end{split}\label{eq:minus_eps}
  \end{align}
  The standard error estimates for difference quotients give
  \begin{equation}
    |r^1_{i,j}| \le (\Delta t)^\eta R_{t},\quad
    |r^2_{i,j}| \le (\Delta x)^\eta R_{x},\quad
    |\tilr^2_{i,j}| \le (\Delta x)^\eta R_{x},
  \end{equation}
  where
  \begin{align}
    R_t & \coloneqq  \sup_{x, t_1 \ne t_2} \frac{|\partial_t v(x,t_1) - \partial_t v(x,t_2)|}{|t_1 - t_2|^\eta}, \label{eq:2drv_bound_t}                   \\
    R_x & \coloneqq  \sup_{a}\sup_{x_1 \ne x_2, t} |f(x_1,a)|\frac{|\partial_x v(x_1,t) - \partial_x v(x_2,t)|}{|x_1 - x_2|^\eta}. \label{eq:2drv_bound_x}
  \end{align}
  Using these estimates and combining them with both \eqref{eq:plus_eps} and \eqref{eq:minus_eps}, we obtain
  \begin{align}
    E_{j-1}\le E_j + \Delta t\left( (\Delta x)^\eta R_x + (\Delta t)^\eta R_t\right).
  \end{align}
  Finally, by sequentially applying this to $j=N_t,\ldots, 0$, the following inequality is established:
  \begin{align}
    \begin{split}
      E_{0} & \le E_{N_t} + N_t \Delta t \left( (\Delta t)^\eta R_t + (\Delta x)^\eta R_x \right) \\
            & = E_{N_t} + T \left( (\Delta t)^\eta R_t + (\Delta x)^\eta R_x \right),
    \end{split}
  \end{align}
  which completes the proof.
\end{proof}

\begin{remark}
  In this proof, we explicitly account for the discrepancy between the minimizers of the continuous HJB equation and its discrete scheme, a technical detail often overlooked in the literature.
  For instance, \cite{Wang2000upwind} performed stability analysis for schemes with different initial values but did not address the difference in minimizers.
  Similarly, \cite{Sun2015Convergence} conducted convergence analysis based on Theorem 9.4.2 of \cite{Fleming2006Controlled} (originally \cite{Barles1991Convergence}) but omitted discussion of this discrepancy.
  Although these differences do not invalidate the stability or convergence results, properly addressing this substitution improves the clarity of the proof.
\end{remark}

\subsection{Convergence of the Control Input Function  $a^*$}

Next, we prove the convergence of the optimal input function.
Prior to this, we show that the spatial differences of the scheme $D^+V_k,\ D^-V_k$ converge to the spatial derivative of the PDE solution $\partial_x v$.

\begin{proposition}\label{prop:convergence_derivative}
  Suppose that Assumption \ref{assmp:HJB} and \ref{assmp:deriv} holds.
  Assume further that the solution of \eqref{eq:HJB} is locally Lipschitz in $x$ uniformly in $t$, and that the modified CFL condition \eqref{eq:modified_CFL} is satisfied.
  Then,
  \begin{align}
    \lim_{k\to\infty} \|D^+V_k-\partial_x v \|_{L^{1}} = 0, \\
    \lim_{k\to\infty} \|D^-V_k-\partial_x v \|_{L^{1}} = 0,
  \end{align}
  where we have defined
  \begin{align}
    \|u\|_{L^1} = \int_0^T\int_\Omega |u(x,t)| \rmd x \rmd t.
  \end{align}
\end{proposition}

\begin{remark}
  The proof is conducted by showing that the HJB equation and conservation laws correspond through differentiation and integration, and that the difference scheme obtained by transforming the HJB equation scheme has convergence properties for conservation laws.
  The convergence of the difference scheme for conservation laws is shown using the results of \cite{Crandall1980Monotone}.
  A detailed proof is shown in the Appendix.
\end{remark}

The spatial derivative convergence obtained above are utilized to show the convergence of the control input function.
For each spatio-temporal parameter $(x,t)\in\Omega\times[0,T]$, we consider the sequences of functions $\{H_k\}_{k\in\bbN}$, $(H_k:E\to\bbR)$ and a function $h:E\to\bbR$, defined as
\begin{align}
  \begin{split}
    H_k(a) & \coloneqq H_k(a;x,t)                                           \\
           & \coloneqq f^+(x,a)D^+V_k(x,t) + f^-(x,a)D^-V_k(x,t) + g(x, a),
  \end{split} \\
  \begin{split}
    h(a) & \coloneqq h(a;x,t) \coloneqq f(x,a)^\top \partial_x v(x,t) + g(x,a).
  \end{split}
\end{align}
From Proposition \ref{prop:convergence_derivative}, $D^+V_k$ and $D^-V_k$ converge to $\partial_x v$ in $L^1$-sense, which imply
\begin{align}
  \lim_{k\to\infty} \|H_k(a;\cdot,\cdot)-h(a;\cdot,\cdot) \|_{L^{1}} = 0.
\end{align}
However, the $L^1$ convergence of functions $H$ to $h$ does not necessarily mean the convergence of the minimizer of the scheme $A$ to the minimizer of the equation $a$ at each $(x,t)\in \Omega\times[0,T]$.
To characterize this, we verify an \emph{epi-convergence} of the function.
A sequence of functions $\{H_k\}_{k\in\bbN}$ is said to \emph{epi-converge} to $h\in \bbR\to \bbR$
if, for every $a\in E$,
\begin{align}
  \liminf _{k} H_{k}\left(a_{k}\right) & \geq h(a)   \text { for every sequence } a_{k} \rightarrow a,\label{eq:epi-cond-inf} \\
  \limsup _{k} H_{k}\left(a_{k}\right) & \leq h(a)   \text { for some sequence } a_{k} \rightarrow a.\label{eq:epi-cond-sup}
\end{align}
Using this concept, the convergence of the minimizer is generally characterized as follows.
\begin{lemma}[Theorem 7.33 in \cite{Rockafellar1998Variational}]\label{lmm:733}
  For a sequence of functions $\{H_k\}_{k\in\bbN}$ and a function $h$, suppose that the following conditions hold:
  \begin{enumerate}
    \renewcommand{\labelenumi}{(\roman{enumi})}
    \renewcommand{\theenumi}{\roman{enumi}}
    \item $\{H_k\}_{k\in\bbN}$ are eventually level-bounded; for all $\alpha\in\bbR$, level sets $\mathrm{lev}_{\leq \alpha}H_k \coloneqq \{w: H_k(w) \leq \alpha\}$ are bounded for all $k\ge \exists k_0\in\mathbb N$.
    \item $H_k$ and $h$ are lower semicontinuous; for all $\bar a\in E$, $\liminf_{a \rightarrow \bar{a}} h(a) \geq h(\bar{a})$ and $\liminf_{a \rightarrow \bar{a}} H_k(a) \geq H_k(\bar{a})$.
    \item $H_k$ and $h$ are proper; for at least one $a\in E$, $h(a)<\infty$ and $H_k(a)<\infty$.
    \item $\{H_k\}_{k\in\bbN}$ epi-converges to $h$.
  \end{enumerate}
  Then, we have
  \begin{align}\label{eq:argmin_convergence}
     & \limsup_k (\argmin_a H_k(a)) \subseteq \argmin_a h(a),
  \end{align}
  where $\limsup$ denotes an \emph{Outer limit}, which is defined in chapter 4 in \cite{Rockafellar1998Variational}.
\end{lemma}

We use this Lemma to prove the convergence of the minimizer of the scheme to that of the HJB equation.

\begin{proof}[Proof of Theorem \ref{thm:convenience_input}]
  We check the conditions in Lemma~\ref{lmm:733}.
  Since $E$ is bounded (as assumed in Assumption \ref{assmp:HJB}-1), all sublevel sets $\{a\in E : H_k(a;x,t)\le \alpha\}$ are uniformly bounded for any $\alpha\in\mathbb{R}$ and any $k$, and hence (i)~eventually level-boundedness holds.
  Since $f(\cdot,a)$ and $g(\cdot,a)$ are continuous in $a$ and $E$ is compact (as in Assumption \ref{assmp:HJB}-2 and 3),
  both $H_k(\cdot;x,t)$ and $h(\cdot;x,t)$ are continuous on $E$, and hence (ii)~lower semicontinuity holds.
  Since both $H_k(\cdot;x,t)$ and $h(\cdot;x,t)$ take finite real values on $E$, they are proper functions, and hence (iii) holds.
  We now verify (iv)~epi-convergence.
  First, we show \eqref{eq:epi-cond-inf}.
  From Proposition \ref{prop:convergence_derivative}, there exists a subsequence $\ell(k)$ such that
  \begin{align}
     & \lim_{k\to \infty} U^+_{\ell(k)}(x,t) = u(x,t), \\
     & \lim_{k\to \infty} U^-_{\ell(k)}(x,t) = u(x,t),
  \end{align}
  for almost every $(x,t)\in \Omega\times [0,T]$ (see \cite{1995Modes}).
  Here, we use the notation $U^\pm_{\ell} = D^\pm V_{\ell}$ and $u=\partial_x v$.
  It is sufficient to show
  \begin{align}
    \lim_{k\to\infty } \sup_{\nu>k} \bigl\{ h(a)- H_{\ell(\nu)}(a_{\ell(\nu)})\bigr\}\le 0,\label{eq:epi-cond-inf1}
  \end{align}
  for sequences $U^{+}_{\ell(\nu)}$, $U^{-}_{\ell(\nu)}$, and $a_{\ell(\nu)}$ that satisfy
  \begin{align}
    \begin{split}
       & \lim_{\nu\to\infty} U^{+}_{\ell(\nu)}=\lim_{\nu\to\infty} U^{-}_{\ell(\nu)}=u \\
       & \quad
      (\Leftrightarrow\forall \varepsilon>0, \exists\nu_u>0, \forall \nu>\nu_u, |U^{\pm}_{\ell(\nu)}-u|<\varepsilon),
    \end{split} \label{eq:u-assump} \\
     & \lim_{\nu\to\infty} a_{\ell(\nu)}=a \quad
    (\Leftrightarrow\forall \varepsilon>0, \exists\nu_a>0, \forall \nu>\nu_a, |a_{\ell(\nu)}-a|<\varepsilon). \label{eq:a-assump}
  \end{align}
  Fix $\varepsilon>0$.
  Since $E$ is compact and $f(\cdot,a)$ and $g(\cdot,a)$ are continuous in $a$, they are uniformly continuous on $E$.
  Hence there exists $\delta>0$ such that $|a'-a|<\delta$ implies
  \begin{align}
     & |f^{\pm}(a')-f^{\pm}(a)|<\varepsilon, \\
     & |g(a')-g(a)|<\varepsilon.
  \end{align}
  By \eqref{eq:a-assump}, there exists $\nu_a(\delta)$ such that $|a_{\ell(\nu)}-a|<\delta$ for all $\nu>\nu_a(\delta)$.
  By \eqref{eq:u-assump}, there exists $\nu_u(\varepsilon)$ such that $|U^\pm_{\ell(\nu)}-u|<\varepsilon$ for all $\nu>\nu_u(\varepsilon)$.
  Let $\nu_0:=\max\{\nu_a(\delta),\,\nu_u(\varepsilon)\}$.
  We arrange the difference as
  \begin{align}
    \begin{split}
      h(a)-H_{\ell(\nu)}\!\left(a_{\ell(\nu)}\right)
       & = f^{+}(a) u+f^{-}(a) u+g(a)                                  \\
       & \quad-\Bigl(f^{+}\!\left(a_{\ell(\nu)}\right) U_{\ell(\nu)}^+
      +f^{-}\!\left(a_{\ell(\nu)}\right) U_{\ell(\nu)}^-
      +g\!\left(a_{\ell(\nu)}\right)\Bigr).
    \end{split}
  \end{align}
  For any $\nu>\nu_0$, we have
  \begin{align}
    \begin{split}
      f^{+}(a)u - f^{+}\!\left(a_{\ell(\nu)}\right) U_{\ell(\nu)}^{+}
       & = u\!\left(f^{+}(a)-f^{+}\!\left(a_{\ell(\nu)}\right)\right)
      + f^{+}\!\left(a_{\ell(\nu)}\right)\!\left(u-U_{\ell(\nu)}^{+}\right)     \\
       & \le |u|\,\bigl|f^{+}(a)-f^{+}\!\left(a_{\ell(\nu)}\right)\bigr|
      + f^{+}\!\left(a_{\ell(\nu)}\right)\,\bigl|u-U_{\ell(\nu)}^{+}\bigr|      \\
       & \le |u|\,\varepsilon + f^{+}\!\left(a_{\ell(\nu)}\right)\,\varepsilon,
    \end{split}
  \end{align}
  and similarly
  \begin{align}
    \begin{split}
      f^{-}(a)u - f^{-}\!\left(a_{\ell(\nu)}\right) U_{\ell(\nu)}^{-}
       & \le |u|\,\bigl|f^{-}(a)-f^{-}\!\left(a_{\ell(\nu)}\right)\bigr|
      + \bigl|f^{-}\!\left(a_{\ell(\nu)}\right)\bigr|\,\bigl|u-U_{\ell(\nu)}^{-}\bigr|      \\
       & \le |u|\,\varepsilon + \bigl|f^{-}\!\left(a_{\ell(\nu)}\right)\bigr|\,\varepsilon,
    \end{split}
  \end{align}
  and
  \begin{align}
    g(a)-g\!\left(a_{\ell(\nu)}\right)\le \varepsilon.
  \end{align}
  Since $f=f^{+}+f^{-}$, summing these bounds yields, for any $\nu>\nu_0$,
  \begin{align}
    \begin{split}
      h(a)-H_{\ell(\nu)}\!\left(a_{\ell(\nu)}\right)
       & \le \Bigl(2|u| + f^{+}\!\left(a_{\ell(\nu)}\right)+\bigl|f^{-}\!\left(a_{\ell(\nu)}\right)\bigr|+1\Bigr)\varepsilon \\
       & \le \Bigl(2|u| + \sup_{a\in E}|f(a)| + 1\Bigr)\varepsilon.
    \end{split}
  \end{align}
  Therefore,
  \begin{align}
    \sup_{\nu>k}\bigl\{h(a)-H_{\ell(\nu)}(a_{\ell(\nu)})\bigr\}
    \le \Bigl(2|u| + \sup_{a\in E}|f(a)| + 1\Bigr)\varepsilon
    \quad \text{for all } k\ge \nu_0,
  \end{align}
  and letting $k\to\infty$ and then $\varepsilon\downarrow 0$ proves \eqref{eq:epi-cond-inf}.
  Next, we show \eqref{eq:epi-cond-sup}.
  We choose the sequence $a_{\ell(\nu)}\equiv a$.
  Then, for any $\nu>\nu_u(\varepsilon)$,
  \begin{align}
    \begin{split}
      H_{\ell(\nu)}(a)-h(a)
       & = f^{+}(a)\bigl(U_{\ell(\nu)}^{+}-u\bigr)+f^{-}(a)\bigl(U_{\ell(\nu)}^{-}-u\bigr)        \\
       & \le f^{+}(a)\bigl|U_{\ell(\nu)}^{+}-u\bigr| + (-f^{-}(a))\bigl|U_{\ell(\nu)}^{-}-u\bigr| \\
       & \le \bigl(f^{+}(a)-f^{-}(a)\bigr)\varepsilon
      = |f(a)|\,\varepsilon.
    \end{split}
  \end{align}
  Hence \eqref{eq:epi-cond-sup} follows by taking $\limsup_{\nu\to\infty}$ and then letting $\varepsilon\downarrow 0$.
  Since all the conditions in Lemma~\ref{lmm:733} are satisfied, we obtain \eqref{eq:argmin_convergence}.
  Finally, we show the convergence of the minimizer as stated in \eqref{eq:convenience_input}.
  Fix $(x,t)$ and define the discrete minimizer sets
  \begin{align}
    \mathcal{A}_k(x,t)\coloneqq\argmin_{a\in E} H_k(a; x,t).
  \end{align}
  By compactness of $E$ (as assumed in Assumption \ref{assmp:HJB}-1), any sequence
  $A^*_{k}(x,t)\in \mathcal{A}_{k}(x,t)$ admits a convergent subsequence
  $A^*_{\ell(k)}(x,t)\to \bar a\in E$ as $k\to\infty$.
  This and \eqref{eq:argmin_convergence} imply
  \begin{align}
    \bar a\in \limsup_{k\to\infty}\mathcal{A}_k(x,t) \subseteq \argmin_{a\in E} h(a;x,t).
  \end{align}
  Finally, Assumption~\ref{assmp:deriv}-1 guarantees uniqueness of the minimizer of $h$,
  i.e., $\argmin_{a\in E} h(x,t;a)=\{a^*(x,t)\}$.
  Therefore, $\bar a=a^*(x,t)$, i.e., \eqref{eq:convenience_input} holds.
  This completes the proof.
\end{proof}

\section{Numerical Experiments}\label{sec:numeric}

We present three numerical examples to examine the convergence behavior of the upwind difference scheme.
For problems with smooth solutions (the first and third examples), the numerical results reproduce the first-order convergence rates predicted by our regularity-based analysis.
For a non-smooth sparse control problem (the second example), the observed rates are consistent with those expected from the standard viscosity-solution theory.

\subsection{One-dimensional LQR problem}\label{sec:numeric_1}

For the Cauchy problem for the HJB equation \eqref{eq:HJB}, consider the case where $f(x,a)=a$, $g(x,a)=\frac{1}{2}(x^2+a^2)$ under $\Omega=\bbR$, $E=\bbR$:
\begin{align}\label{eq:linear_HJB}
  \begin{dcases}
    \partial_t  v(x,t) = -\min_{a\in \bbR} \left[a\partial_x v(x,t) + \frac{1}{2}( x^2 + a^2) \right], & (x,t) \in \bbR\times(0,T), \\
    v(x,T) = v_T (x) = 0,                                                                              & x\in \bbR.
  \end{dcases}
\end{align}
This HJB equation has an exact solution:
\begin{align}
  v(x,t)   & = \frac{e^{2(T-t)} - 1}{e^{2(T-t)} + 1}\frac{x^2}{2},\label{eq:exact_v_LO} \\
  a^*(x,t) & = -\frac{e^{2(T-t)} - 1}{e^{2(T-t)} + 1}x.\label{eq:exact_a_LO}
\end{align}

The problem considered above with $\Omega=\bbR$ is different from the problem with $\Omega =[-1,1]$ considered in Section~\ref{sec:scheme}, but the solutions in the region excluding the boundary are expected to be close to each other.
Thus, we calculate the difference between the solution of the scheme in \eqref{eq:upwind_HJB} and \eqref{eq:exact_v_LO}, \eqref{eq:exact_a_LO} to check the convergence speed for the discretization parameters $\Delta t, \Delta x$.
By considering the affordable input set $E=[-1,1]$, we obtain
\begin{align}
  \sup_{x\in\Omega,a\in E}|f(x,a)|=1.
\end{align}
Thus, by setting $\Delta x$ and $\Delta t$ such that
\begin{align}
  \alpha = \frac{\Delta t}{\Delta x} \le \frac{1}{2},
\end{align}
the CFL condition in \eqref{eq:CFL} and the modified CFL condition in \eqref{eq:modified_CFL} are both satisfied.
Specifically, we set $\Delta t = 0.5 \Delta x$.

\begin{figure}[th]
  \centering
  \includegraphics[width=0.8\textwidth]{./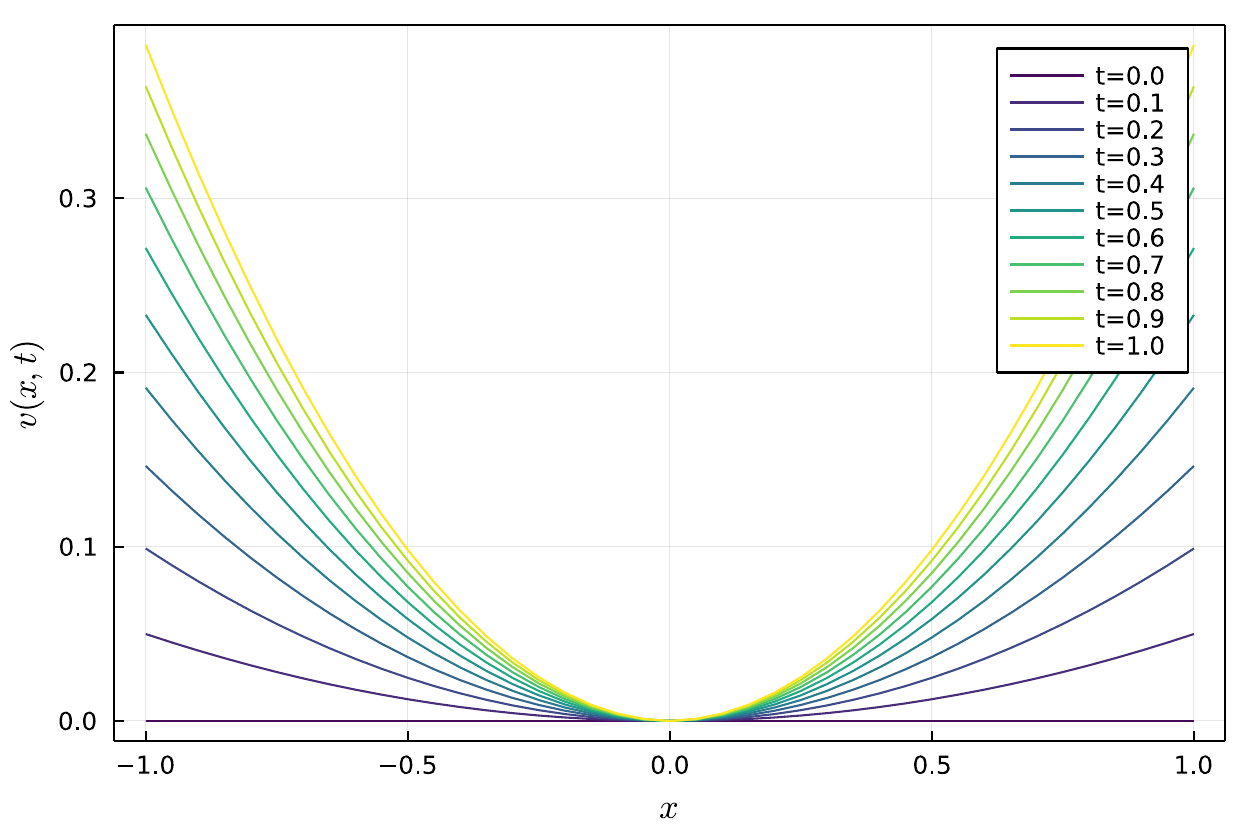}
  \caption{Time response for the variable $v$. }
  \label{fig:HJB_v}
\end{figure}
\begin{figure}[th]
  \centering
  \includegraphics[width=0.8\textwidth]{./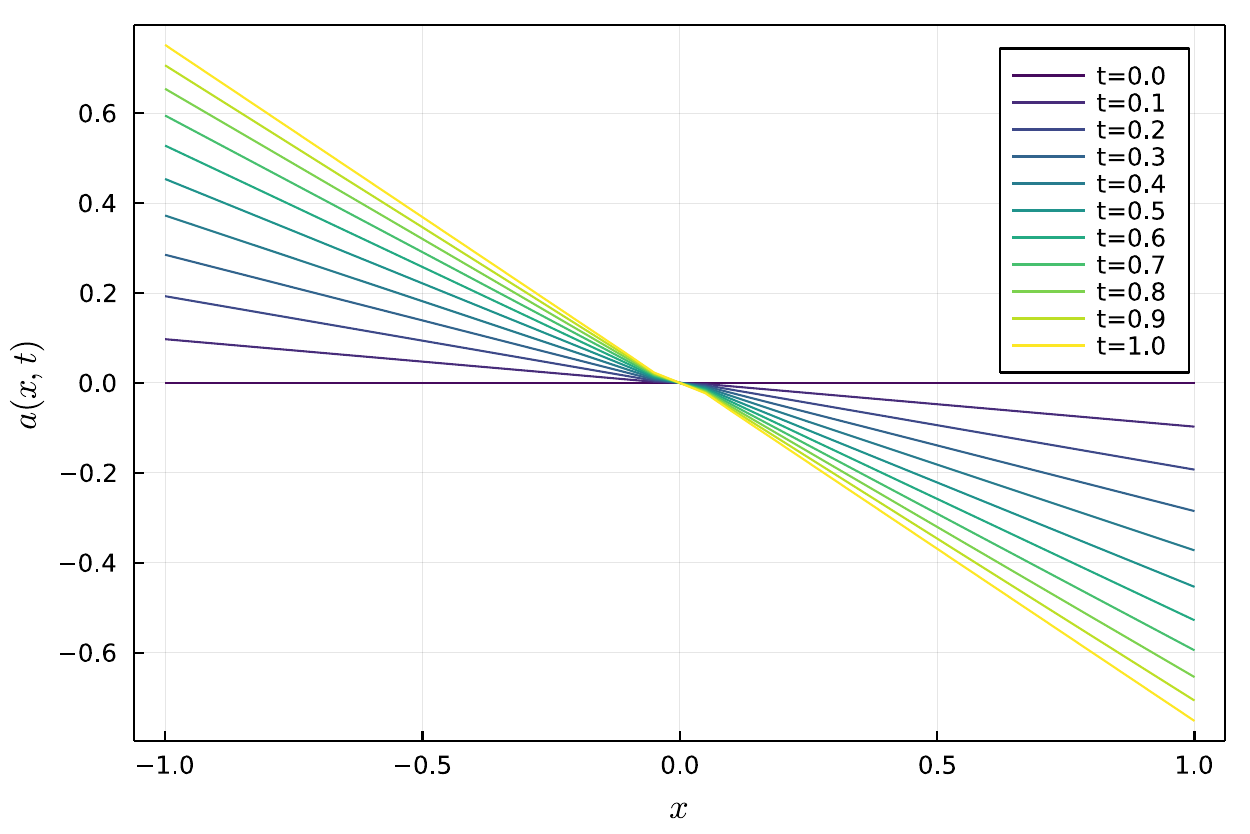}
  \caption{Time response for the variable $a$. }
  \label{fig:HJB_a}
\end{figure}

First, the time responses of the solution $v$ and $a$ of the scheme, when we use $\Delta x=0.05$, are shown in Figs.~\ref{fig:HJB_v} and \ref{fig:HJB_a}.
Here, the L-BFGS method~\cite{Byrd1995Limited} is used to solve the minimization problem \eqref{eq:upwind_HJB_input} at each time step.
From these figures, we see that the form of the function obtained from the scheme suitably approximates the exact solution in \eqref{eq:exact_v_LO} and \eqref{eq:exact_a_LO}.

\begin{figure}[th]
  \centering
  \includegraphics[width=0.8\textwidth]{./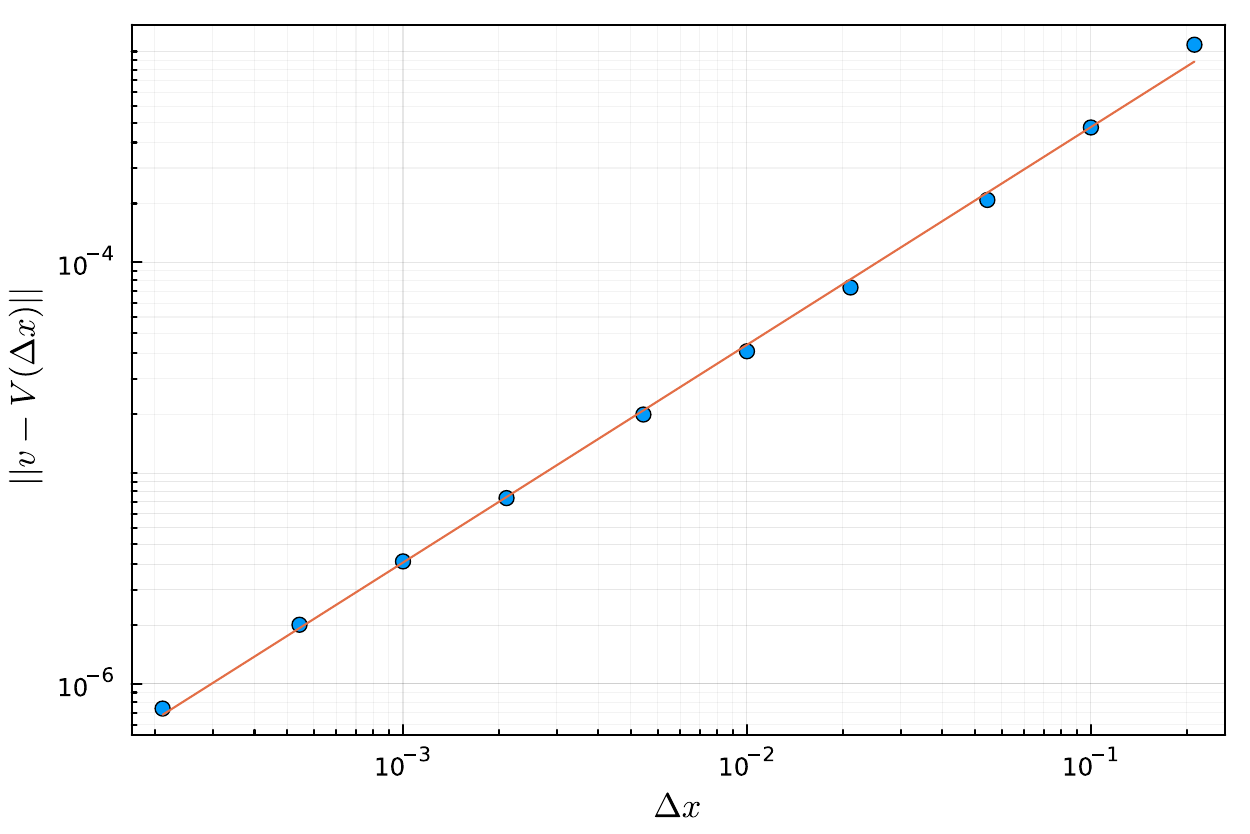}
  \caption{Error of variable $v$ for various $\Delta x$. }
  \label{fig:HJB_err_u}
\end{figure}
\begin{figure}[th]
  \centering
  \includegraphics[width=0.8\textwidth]{./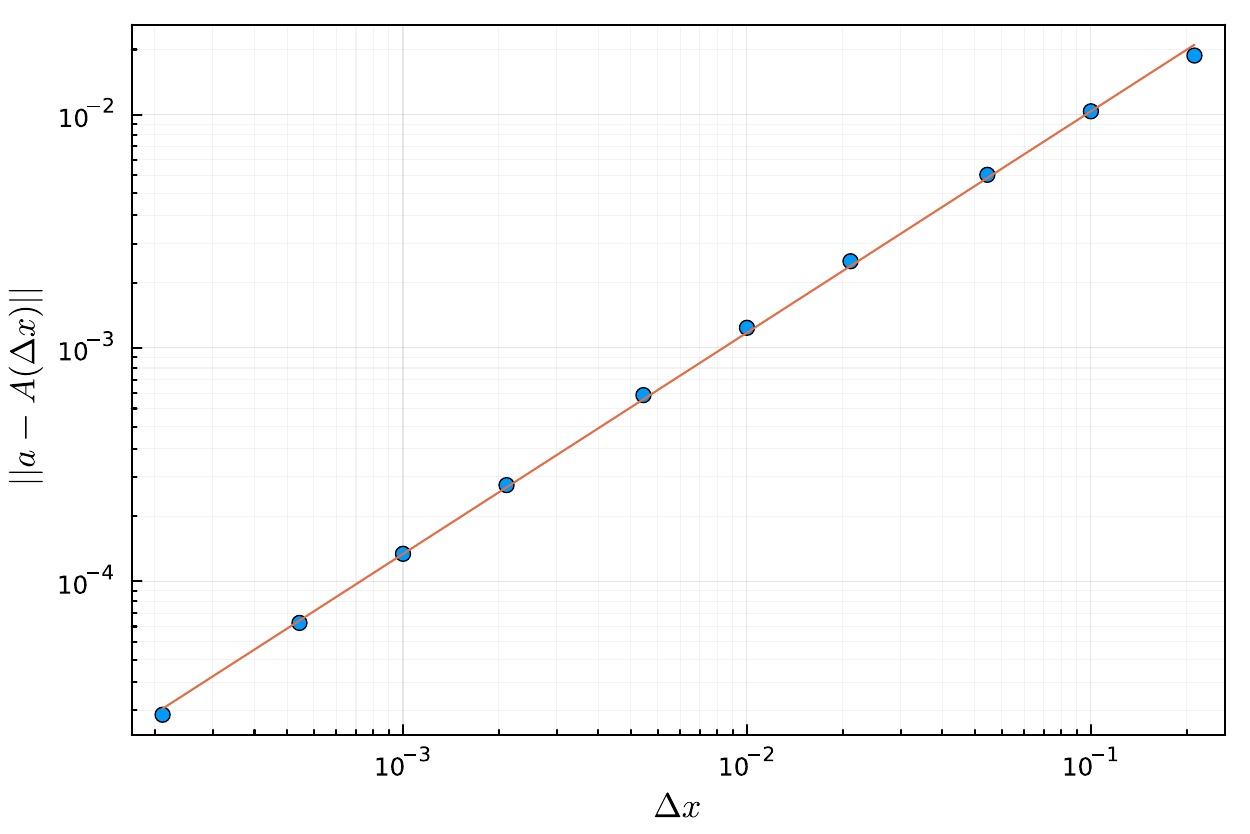}
  \caption{Error of variable $a$ for various $\Delta x$.}
  \label{fig:HJB_err_a}
\end{figure}

Next, Fig.~\ref{fig:HJB_err_u} plots the error $\|v-V(\Delta x)\|$ between the exact solution $v$ and the numerical solution $V(\Delta)$, where $V(\Delta x)$ denotes the solution with different values of $\Delta x$ and $\Delta t = 0.5\Delta x$.
Figure.~\ref{fig:HJB_err_a} plots the error $\|a-A(\Delta x)\|$ for the control input.
Here, we define $\|z\|\coloneqq \{\sum_{i,j} z(x_i,t_j)^2\ \Delta x \Delta t\}^{\frac{1}{2}}$.
In both figures, the smaller the discretization parameter is, the smaller the error becomes.
The order for the variable $v$ is $O(\Delta x^{1.03})$, which supports the results in Theorem~\ref{thm:convergence}.
The order for the variable $a$ is $O(\Delta x^{0.95})$.

\subsection{One-dimensional sparse control problem}

Next, we consider the case where $f(x,a)=a$, $g(x,a)=|a|$, $v_T(x) = x^2$, $\Omega=[0,1]$, $E=[-1,1]$:
\begin{align}\label{eq:l1_HJB}
  \begin{dcases}
    \partial_t  v(x,t) = -\min_{a\in E} \left[a\partial_x v(x,t) + |a| \right], & (x,t) \in \Omega\times(0,T), \\
    v(x,T) = v_T (x) = x^2,                                                     & x\in \Omega.
  \end{dcases}
\end{align}
This HJB equation corresponds to a sparse optimal control problem, which aims to minimize the time that non-zero control input values are generated~\cite{Ikeda2019Sparse}.
We note that the solutions for sparse optimal control are generally not smooth.

\begin{figure}[th]
  \centering
  \includegraphics[width=0.8\textwidth]{./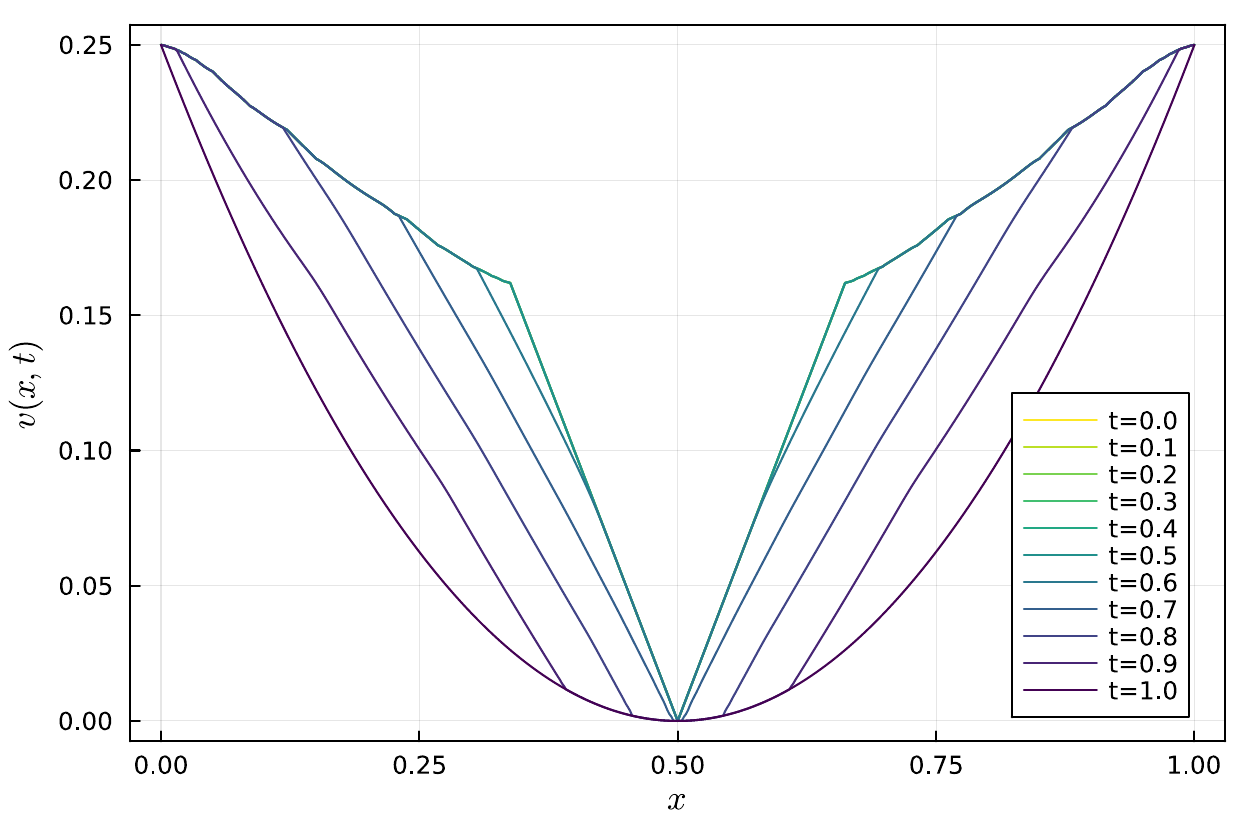}
  \caption{Time response for the variable $v$. }
  \label{fig:HJB12_v}
\end{figure}
\begin{figure}[th]
  \centering
  \includegraphics[width=0.8\textwidth]{./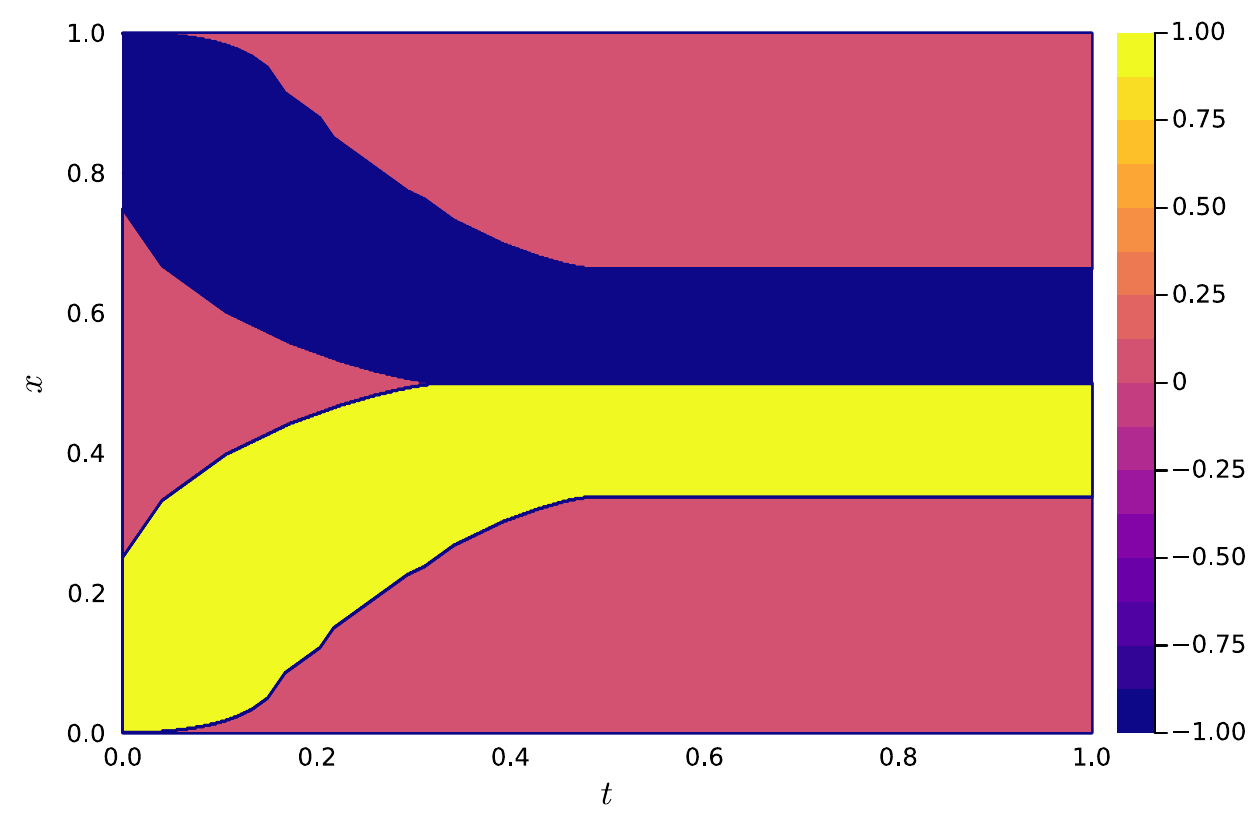}
  \caption{Time response for the variable $a$. }
  \label{fig:HJB12_a}
\end{figure}

First, Figs.~\ref{fig:HJB12_v} and \ref{fig:HJB12_a} show the time responses of the solution $v$ and $a$ of the scheme when we use $\Delta x=0.002$ and $\Delta t = 0.001$.
We used the same optimization method as in Section.~\ref{sec:numeric_1}.
The control input value is the maximum or minimum admissible value near the target state ($x=0.5$) and 0 in the region far from the target state, which is a reasonable solution for sparse optimal control.

\begin{figure}[th]
  \centering
  \includegraphics[width=0.8\textwidth]{./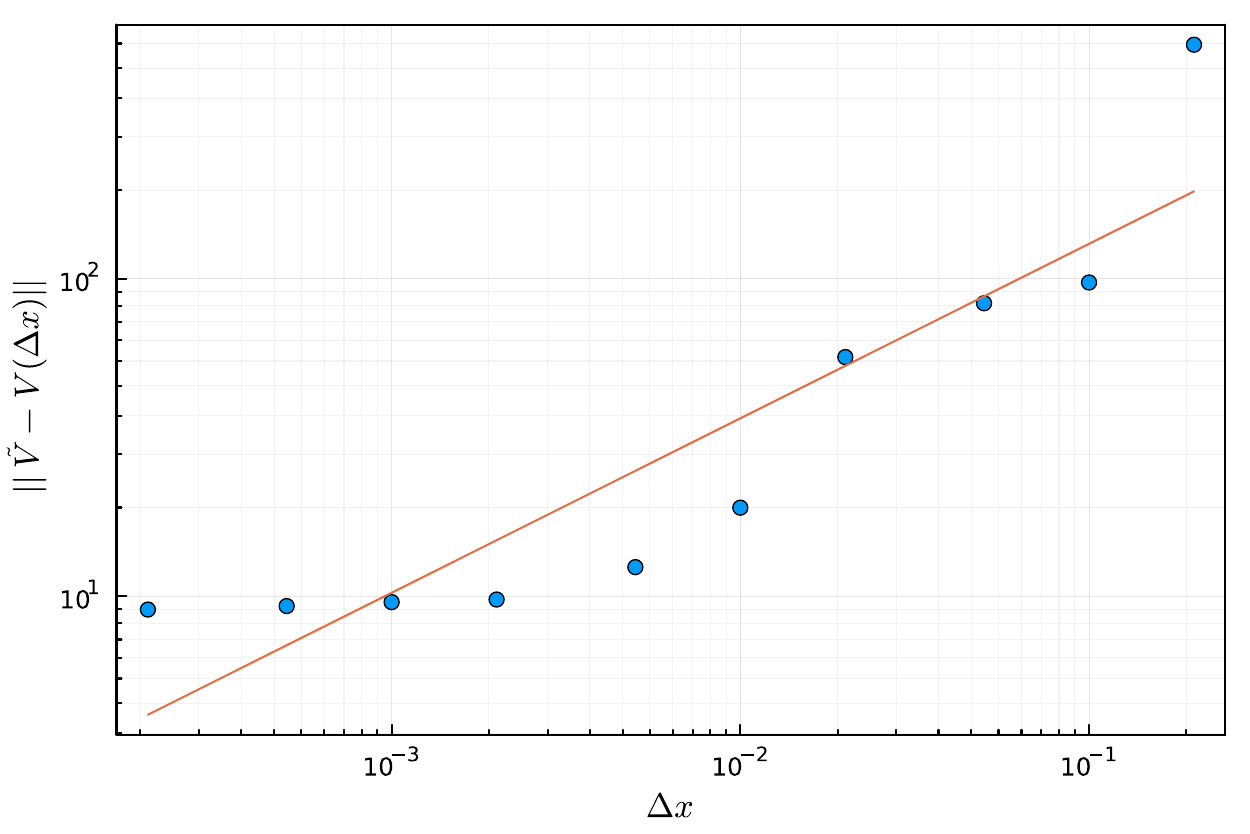}
  \caption{Error of variable $v$ for various $\Delta x$. }
  \label{fig:HJB12_err_u}
\end{figure}
\begin{figure}[th]
  \centering
  \includegraphics[width=0.8\textwidth]{./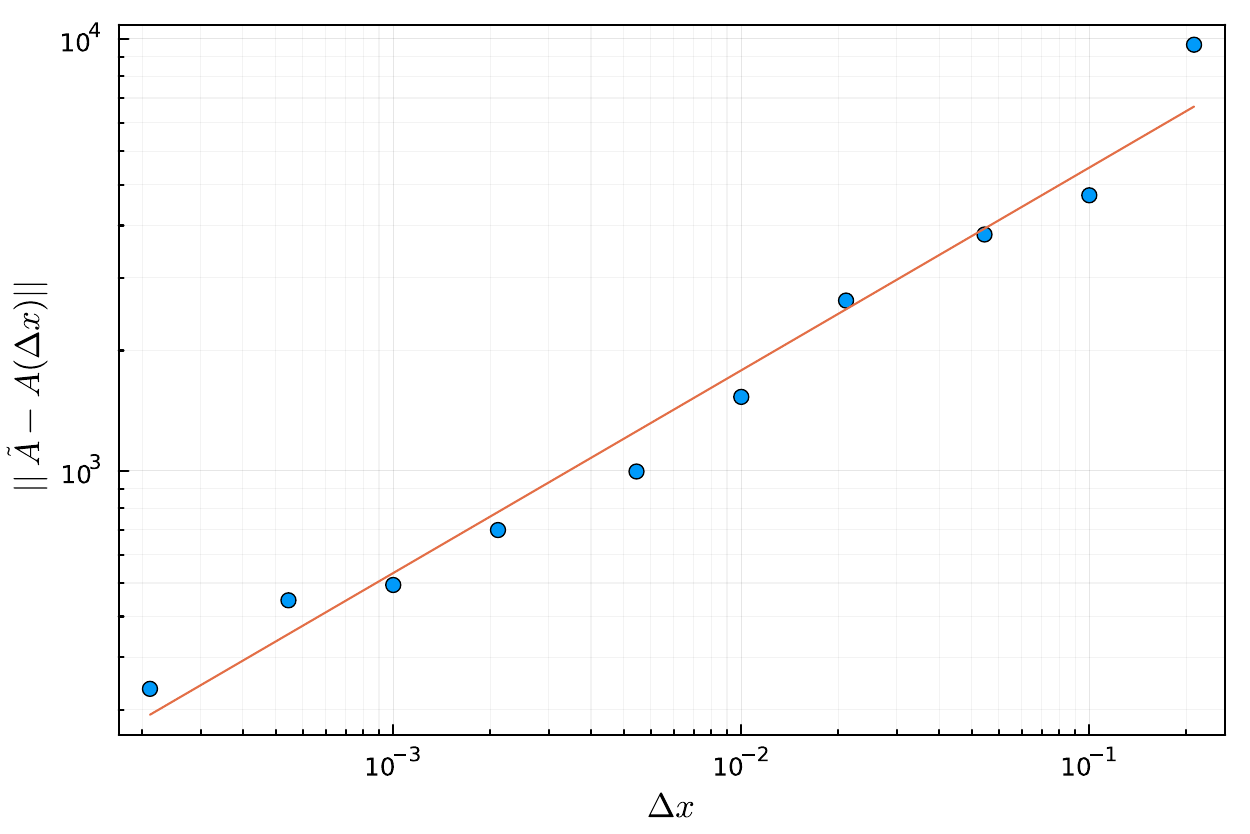}
  \caption{Error of variable $a$ for various $\Delta x$.}
  \label{fig:HJB12_err_a}
\end{figure}

Figures.~\ref{fig:HJB12_err_u} and \ref{fig:HJB12_err_a} numerically confirm the convergence results obtained in the previous section.
Since the exact solution is unknown, we compare the numerical solution $V(\Delta x)$ (resp. $A(\Delta x)$) with the reference solution $\tilV$ (resp. $\tilA$), which are also numerical solutions obtained with small discretization parameters $\Delta x = 0.0005$ and $\Delta t = 0.00025$.
Here, the error is defined by $\|z\|\coloneqq \{\sum_{i,j} z(x_i,t_j)^2\ \Delta x \Delta t\}^{\frac{1}{2}}$.
In both figures, the smaller the discretization parameter is, the smaller the error becomes.
The order for the variable $v$ is $O(\Delta x^{0.55})$ and the order for the variable $a$ is $O(\Delta x^{0.47})$.
In these results, the convergence rate is slower than in the previous example in Section.~\ref{sec:numeric_1}.
These results are reasonable since the rate is not guaranteed to be $O(\Delta x)$ when $v$ is a non-smooth solution.

\subsection{Two-dimensional problem}

The scheme presented in Section~\ref{sec:scheme} can be easily extended to higher-dimensional problems by considering the upwind differencing in each coordinate.
We confirm numerically that the convergence properties hold for such higher-dimensional problems as follows.
We consider the two-dimensional interval $\Omega=[-1,1]^2$ with periodic boundary conditions and use the following functions for \eqref{eq:ODE_system} and \eqref{eq:cost_func}:
\begin{align}
  f(x, a) & = B a,                                                                   \\
  \begin{split}
    g(x, a) & = (x-x_T)^\top Q (x-x_T) + S\exp(-(x-x_O)^\top\Sigma_O^{-1}(x-x_O)) \\
            & \quad + a^T R a,
  \end{split}      \\
  v_T(x)  & = (x-x_T)^\top Q_T (x-x_T) + S_T\exp(-(x-x_O)^\top\Sigma_O^{-1}(x-x_O)),
\end{align}
where $B\in \bbR^{2\times 2}$ is the parameter of the dynamics, $x_T$ is the destination, and $x_O$ is the location of an obstacle.
The parameters $R$, $Q$, $Q_T$, $\Sigma_I$, $\Sigma_O\in\bbR^{2\times 2}$ are positive-definitive matrices, and $S$, $S_T$, $C\in\bbR$ are positive scalars.
We use $B = \mathrm{diag}(1,1)$, $R = \mathrm{diag}(1,1)$, $Q=\mathrm{diag}(1,1)$, $Q_T=\mathrm{diag}(0.8,0.8)$, $\Sigma_I=\mathrm{diag}(0.02, 0.02)$, $\Sigma_O=\mathrm{diag}(0.01, 0.01)$, $S=0.2$, and $S_T=0.2$ for the values for these parameters.
We set $x_T=[0.5, 0.5]^\top$ and $x_O=[-0.1, -0.1]^\top$.
The evaluation function above means that the system tries to achieve the following three goals: 1) keep its own input as small as possible, 2) keep the position $x$ close to the destination $x_T$, and 3) avoid the obstacle located at $x_O$ as much as possible.
We define discrete parameters as $\Delta x_1 = \Delta x_2 = 0.01$ and
$\Delta t = 0.001$.

\begin{figure}[th]
  \centering
  \includegraphics[width=0.9\linewidth]{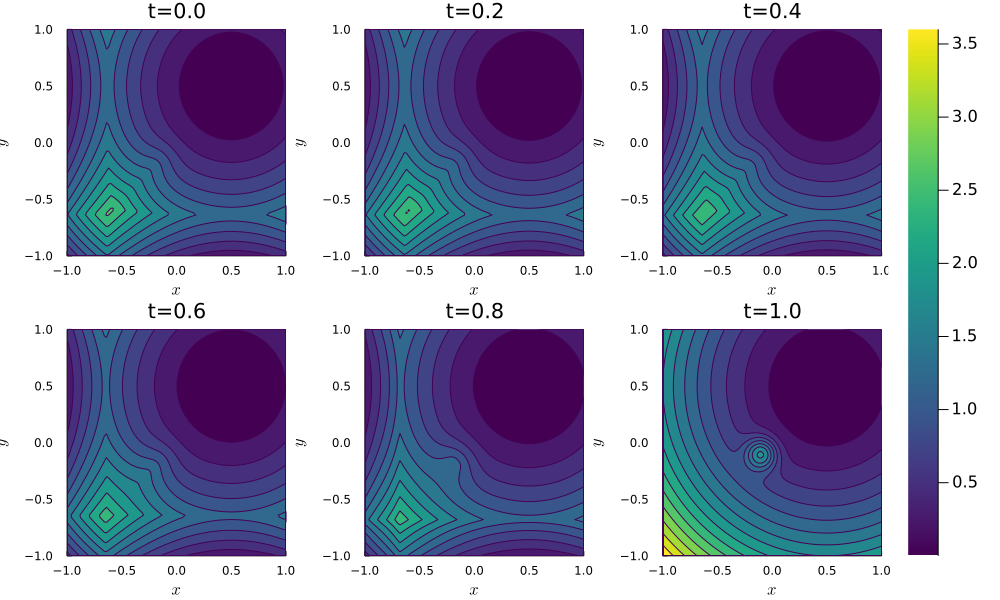}
  \caption{
    Trajectory of the value function $v$. }
  \label{fig:v-evolution-2d}
\end{figure}

\begin{figure}[th]
  \centering
  \includegraphics[width=0.9\linewidth]{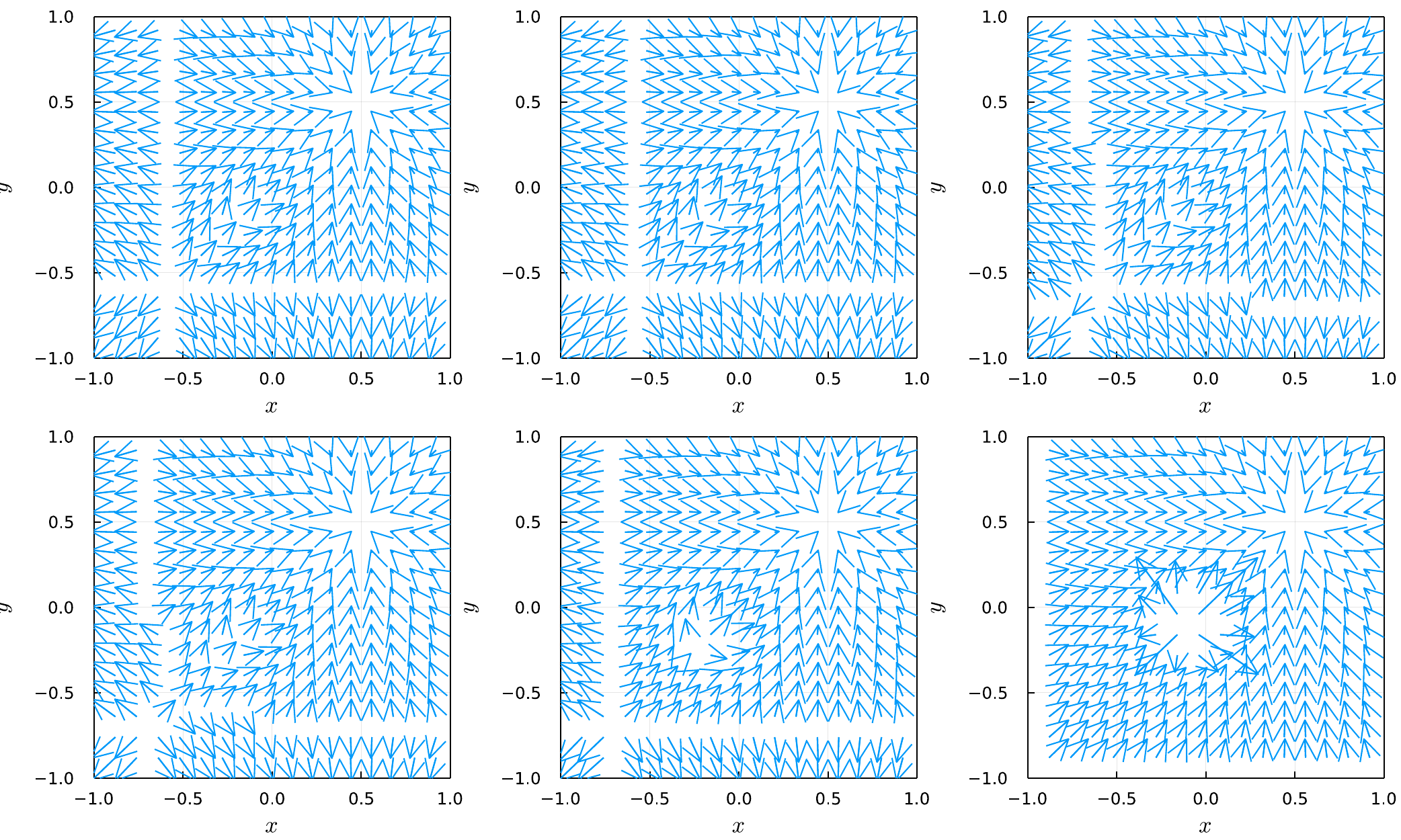}
  \caption{
    Trajectory of the value function $a$. }
  \label{fig:a-evolution-2d}
\end{figure}

Figures~\ref{fig:v-evolution-2d} and \ref{fig:a-evolution-2d} show the time evolution of the value function $v$ and the input function $a$, respectively.
We see that the input is generated to approach the destination location in the upper right while avoiding the obstacle at the origin.

\begin{figure}[th]
  \centering
  \includegraphics[width=0.8\textwidth]{./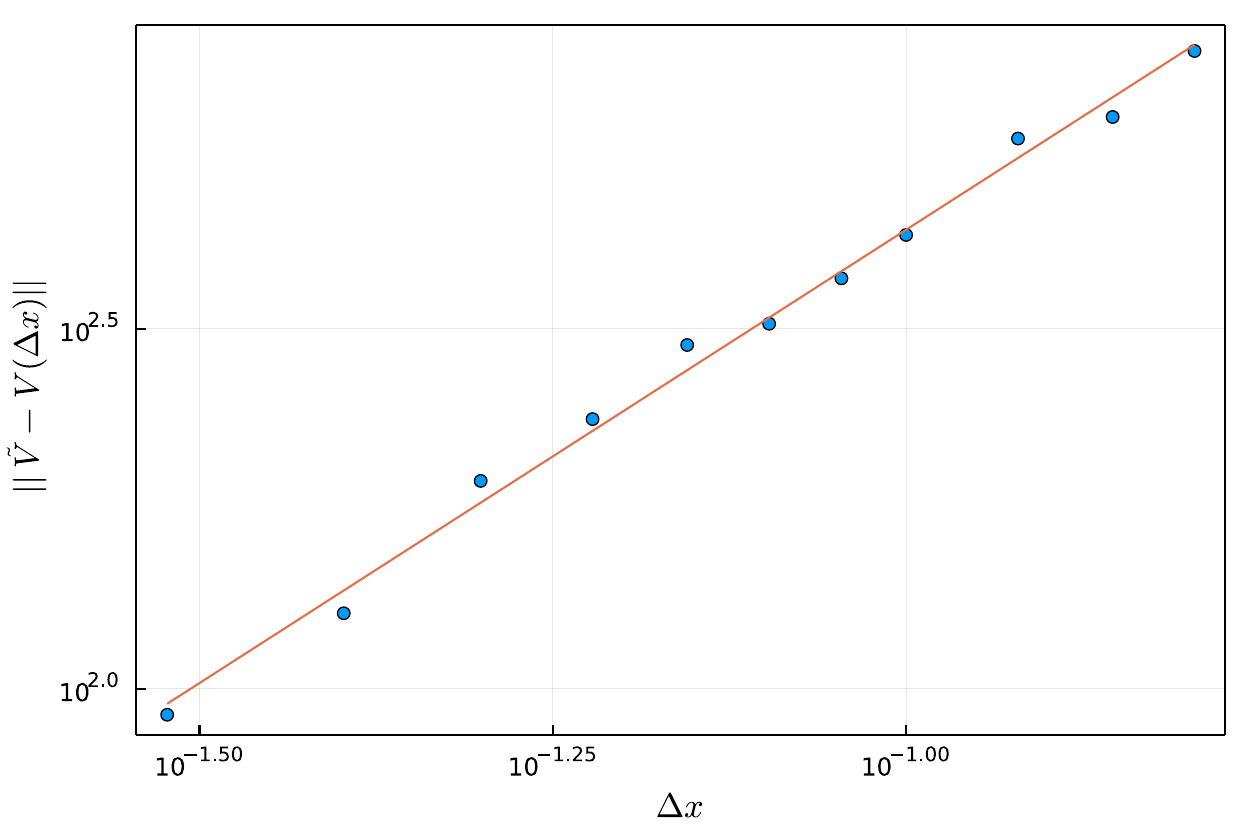}
  \caption{Error of variable $v$ for various $\Delta x$.}
  \label{fig:HJB_2d_err_u}
\end{figure}
\begin{figure}[th]
  \centering
  \includegraphics[width=0.8\textwidth]{./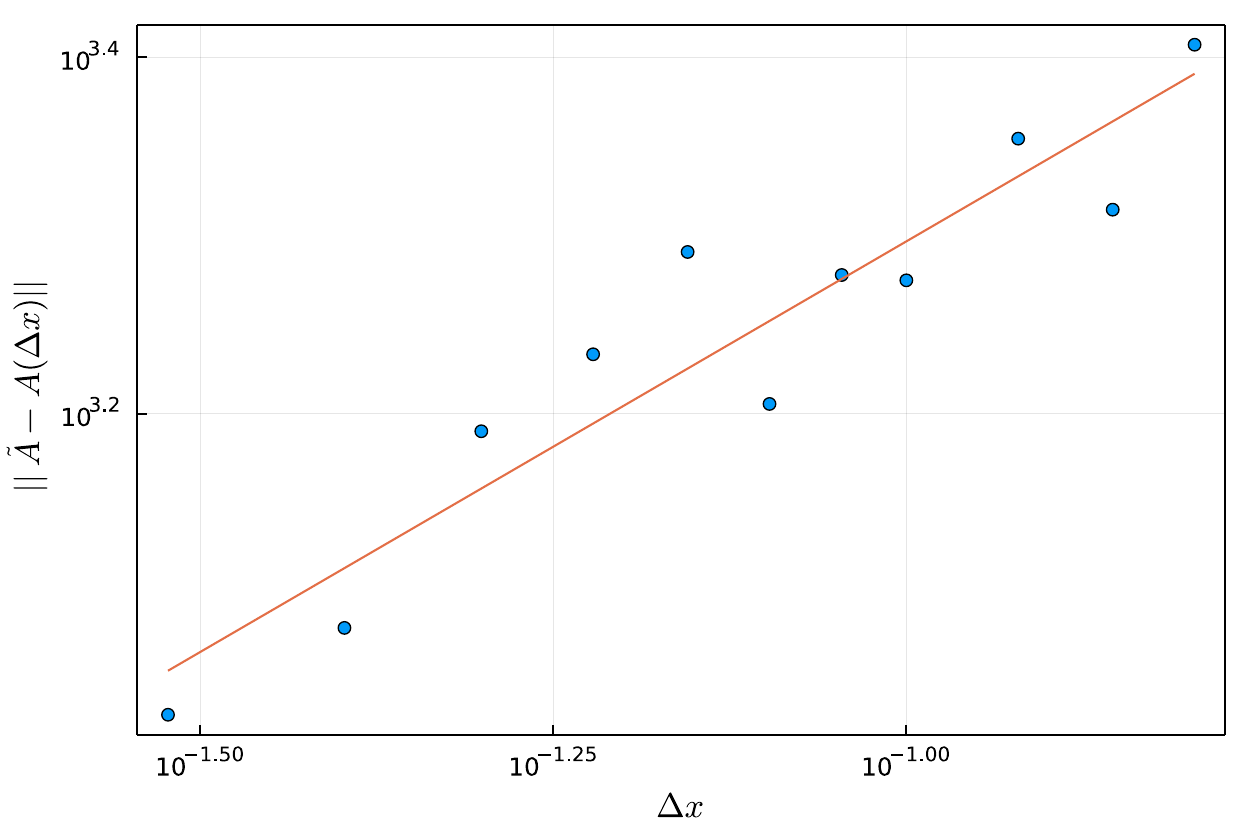}
  \caption{Error of variable $a$ for various $\Delta x$.}
  \label{fig:HJB_2d_err_a}
\end{figure}

We check the convergence speed as follows.
Since the exact solution is unknown, we define the reference solution $\tilde V$ with the parameters $\Delta x = 0.01$ and $\Delta t = 0.001$.
We then calculate the relative error $\|\tilde V-V(\Delta x)\|$.
Figure~\ref{fig:HJB_2d_err_u} plots the error $\|\tilde V-V(\Delta x)\|$ for various $\Delta x$,
and Fig.~\ref{fig:HJB_2d_err_a} plots the error of the input $\|\tilde A-A(\Delta x)\|$.
Here, we define $\|z\|\coloneqq \{\sum_{i,j,k} z([x_i,y_j]^\top, t_k)^2\ (\Delta x)^2 \Delta t\}^{\frac{1}{2}}$.
In both figures, the smaller the discretization parameter is, the smaller the error becomes.
We find that the variable $v$ is of order $O(\Delta x^{1.26})$, which supports the presented convergence properties in Theorem \ref{thm:convergence}.
The variable $a$ is of order $O(\Delta x^{0.46})$.

\section{Conclusion}\label{sec:conclusion}

In this paper, we established convergence properties of the upwind difference scheme for the HJB equation.
We proved that when the HJB equation has a classical solution, the value function
converges at a rate of $O(\Delta t^\eta + \Delta x^\eta)$, which is consistent with the standard convergence results for viscosity solutions.
Furthermore, by establishing $L^1$ convergence of the spatial derivative of the value function, we also proved convergence of the optimal control input function.
Numerical experiments confirm that the upwind difference scheme provides reasonable solutions to control problems and exhibits the predicted convergence behavior.
The convergence analysis of the control input relies on a one-dimensional correspondence
with scalar conservation laws.
Because this structure is absent in higher dimensions, the present proof is limited to
the one-dimensional case, leaving multidimensional extensions as an open problem.


\section*{Appendix}

In this Appendix, we provide the proof of Proposition \ref{prop:convergence_derivative}.
In the proof, we first establish that the HJB equation and conservation laws correspond through differentiation and integration, and derive the numerical scheme for conservation laws by differentiating the upwind difference scheme (Lemma \ref{lmm:correspondence_CL_discrete}).
Next, we describe sufficient conditions for $L^1$ convergence of the conservation law scheme to the solution (Lemma \ref{lmm:CL_convergence}), and prove Proposition \ref{prop:convergence_derivative} by verifying that the transformed upwind difference scheme satisfies these conditions.

\subsection*{Correspondence between HJB Equation and Conservation Law}

The convergence properties of the derivative are shown using the correspondence between the HJB equation and the conservation law.
Since we are considering a one-dimensional problem, the HJB equation \eqref{eq:HJB} is equivalent to the following conservation law:
\begin{align}
  \begin{split}
    \partial_t u(x,t) & = -\partial_x\qty{\min_{a\in E} \left[f(x,a) u(x,t) + g(x,a)\right]}, \\
    u(x,T)            & = u_T(x).
  \end{split}\label{eq:CL}
\end{align}
This correspondence is shown in Theorem 2.20 in \cite{Colombo2023Conservation}.

We show below that a similar correspondence holds for the finite difference schemes.

\begin{lemma}\label{lmm:correspondence_CL_discrete}
  Let $(V_{i,j})$ be the solution of the upwind difference scheme
  \eqref{eq:upwind_HJB}, and define the forward and backward spatial differences
  \begin{align}
    U_{i,j}\coloneqq D^+V_{i,j},
    \qquad
    \hat U_{i,j}\coloneqq D^-V_{i,j}.
  \end{align}
  Fix a time index $j$.
  For each spatial index $i$, choose any minimizers
  \begin{align}
     & A_{i,j}^*
    \in \argmin_{a\in E}
    \Bigl\{
    f^{+}(a)\,D^+V_{i,j}
    + f^{-}(a)\,D^-V_{i,j}
    + g(a)
    \Bigr\},          \\
     & \hat A_{i,j}^*
    \in \argmin_{a\in E}
    \Bigl\{
    f^{+}(a)\,D^+V_{i-1,j}
    + f^{-}(a)\,D^-V_{i-1,j}
    + g(a)
    \Bigr\}.
  \end{align}
  Then, the time evolution of $U_{i,j}$ and $\hat U_{i,j}$ is given by
  \begin{align}
    \begin{split}\label{eq:CL_discrete_1}
      U_{i,j-1}
       & = U_{i,j}
      + \alpha\Bigl\{
      f^{+}(A_{i+1,j}^{*}) U_{i+1,j}
      + f^{-}(A_{i+1,j}^{*}) U_{i,j}
      + g(A_{i+1,j}^{*}) \\
       & \qquad\qquad
      -\bigl(
      f^{+}(A_{i,j}^{*}) U_{i,j}
      + f^{-}(A_{i,j}^{*}) U_{i-1,j}
      + g(A_{i,j}^{*})
      \bigr)
      \Bigr\},
    \end{split}
  \end{align}
  and
  \begin{align}
    \begin{split}\label{eq:CL_discrete_2}
      \hat U_{i,j-1}
       & = \hat U_{i,j}
      + \alpha\Bigl\{
      f^{+}(\hat A_{i,j}^{*}) \hat U_{i+1,j}
      + f^{-}(\hat A_{i,j}^{*}) \hat U_{i,j}
      + g(\hat A_{i,j}^{*}) \\
       & \qquad\qquad
      -\bigl(
      f^{+}(\hat A_{i-1,j}^{*}) \hat U_{i,j}
      + f^{-}(\hat A_{i-1,j}^{*}) \hat U_{i-1,j}
      + g(\hat A_{i-1,j}^{*})
      \bigr)
      \Bigr\},
    \end{split}
  \end{align}
  respectively.
\end{lemma}

\begin{proof}[Proof of Lemma \ref{lmm:correspondence_CL_discrete}]
  By definition,
  \begin{align}
    U_{i,j}=\frac{V_{i+1,j}-V_{i,j}}{\Delta x}
    \quad\Longrightarrow\quad
    \frac{U_{i,j-1}-U_{i,j}}{\Delta t}
    =\frac{1}{\Delta x}\left(
    \frac{V_{i+1,j-1}-V_{i+1,j}}{\Delta t}
    -\frac{V_{i,j-1}-V_{i,j}}{\Delta t}
    \right).
  \end{align}
  Applying the upwind scheme \eqref{eq:upwind_HJB} at the grid points $(i+1,j)$ and $(i,j)$
  with the (fixed) minimizers $A_{i+1,j}^*$ and $A_{i,j}^*$ yields
  \begin{align}
    \frac{V_{i+1,j-1}-V_{i+1,j}}{\Delta t}
     & = f^{+}(A_{i+1,j}^{*})\,D^+V_{i+1,j}
    + f^{-}(A_{i+1,j}^{*})\,D^-V_{i+1,j}
    + g(A_{i+1,j}^{*}),                     \\
    \frac{V_{i,j-1}-V_{i,j}}{\Delta t}
     & = f^{+}(A_{i,j}^{*})\,D^+V_{i,j}
    + f^{-}(A_{i,j}^{*})\,D^-V_{i,j}
    + g(A_{i,j}^{*}).
  \end{align}
  Subtracting the two identities and using the relations
  \begin{align}
    D^+V_{i+1,j}=U_{i+1,j},\qquad
    D^-V_{i+1,j}=D^+V_{i,j}=U_{i,j},\qquad
    D^-V_{i,j}=U_{i-1,j},
  \end{align}
  we obtain
  \begin{align}
    \begin{aligned}
      \frac{U_{i,j-1}-U_{i,j}}{\Delta t}
       & =\frac{1}{\Delta x}\left\{
      f^{+}(A_{i+1,j}^{*}) U_{i+1,j}
      + f^{-}(A_{i+1,j}^{*}) U_{i,j}
      + g(A_{i+1,j}^{*})\right.     \\
       & -\left.\bigl(
      f^{+}(A_{i,j}^{*}) U_{i,j}
      + f^{-}(A_{i,j}^{*}) U_{i-1,j}
      + g(A_{i,j}^{*})
      \bigr)
      \right\}.
    \end{aligned}
  \end{align}
  Multiplying by $\Delta t$ and recalling $\alpha=\Delta t/\Delta x$ gives \eqref{eq:CL_discrete_1}.
  %
  Similarly, from $\hat U_{i,j}=(V_{i,j}-V_{i-1,j})/\Delta x$ we have
  \begin{align}
    \frac{\hat U_{i,j-1}-\hat U_{i,j}}{\Delta t}
    =\frac{1}{\Delta x}\left(
    \frac{V_{i,j-1}-V_{i,j}}{\Delta t}
    -\frac{V_{i-1,j-1}-V_{i-1,j}}{\Delta t}
    \right).
  \end{align}
  Applying \eqref{eq:upwind_HJB} at $(i,j)$ and $(i-1,j)$ with minimizers
  $\hat A_{i,j}^*$ and $\hat A_{i-1,j}^*$, and using
  \begin{align}
    D^+V_{i,j}=\hat U_{i+1,j},\quad D^+V_{i-1,j}=D^-V_{i,j}=\hat U_{i,j},\quad D^-V_{i-1,j}=\hat U_{i-1,j},
  \end{align}
  we obtain \eqref{eq:CL_discrete_2} after the same rearrangement.
\end{proof}

\subsection*{Convergence Conditions for Conservation Law Schemes}

For the proof of Proposition \ref{prop:convergence_derivative}, it is sufficient to show that the solution $U_{i,j}$ (resp. $\hat U_{i,j}$) of \eqref{eq:CL_discrete_1} (resp. \eqref{eq:CL_discrete_2}) converges to the solution $u(x,t)$ of \eqref{eq:CL}.
To show this, we introduce the following lemma.

\begin{lemma}[Theorem 1 in \cite{Crandall1980Monotone}]\label{lmm:CL_convergence}
  Consider approximating a Cauchy problem of the one-dimensional conservation law
  \begin{align}\label{eq:CL_ex}
     & \partial_t w(x,t) = \partial_x p(w(x,t)),
  \end{align}
  with a finite difference scheme
  \begin{align}\label{eq:CL_discrete_ex}
     & W_{i,j+1} = W_{i,j} + \alpha\left[ P(W_{j}; i ) - P(W_{j}; i-1 ) \right],
  \end{align}
  where $P(W_{j}; i )$ denotes the following function for some $r,s>0$:
  \begin{align}
    P(W_{j}; i ) = P(W_{i-r,j},\ldots,W_{i-1,j},W_{i,j},W_{i+1,j},\ldots,W_{i+s,j}).
  \end{align}
  Suppose that the following conditions are satisfied:
  \begin{enumerate}
    \renewcommand{\labelenumi}{(\Alph{enumi})}
    \renewcommand{\theenumi}{\Alph{enumi}}
    \item Lipschitz continuity: There exists a positive constant $K>0$, and the following holds:
          \begin{align}\label{eq:CL_lipschitz}
            \left|P\left(W_{i-r}, \ldots, W_{i+s}\right)-p(\bar{w})\right| \leq K \max _{-r \leq \ell \leq s}\left|W_{i+\ell}-\bar{w}\right|.
          \end{align}
    \item Monotonicity: The following holds:
          \begin{align}
            W_{i,j} \geq \tilde W_{i,j} \quad \forall i \quad \Longrightarrow \quad W_{i,j+1} \geq \tilde W_{i,j+1} \quad \forall i.
          \end{align}
    \item Initial value consistency: For any smooth test function $\phi$, the following holds:
          \begin{align}\label{eq:initial_lax-wendroff}
            \lim_{N_x\to\infty}\frac{1}{2N_x} \sum_{i=-N_x}^{N_x} \phi\left(x_{i}\right) W_{i,0} = \int_\Omega \phi(x)w(x,0) \rmd x.
          \end{align}
  \end{enumerate}
  Then,
  \begin{align}
    \lim_{k\to\infty} \|W_k-w \|_{L^{1}} = 0,
  \end{align}
  where the function $W_k$ is defined as
  \begin{align}
    W_k (x,t) \coloneqq W_{i,j} \quad \text{for } (x,t) \in [x_{i-1/2}, x_{i+1/2}) \times [t_j,t_{j+1}),
  \end{align}
  and $w$ denotes the unique entropy solution of the conservation law \eqref{eq:CL_ex} (see section 2 of \cite{Crandall1980Monotone} for the definition of the entropy solution).
\end{lemma}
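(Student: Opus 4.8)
The plan is to follow the classical route for convergence of monotone conservation-law schemes: extract a convergent subsequence of $\{W_k\}$ by compactness, identify the limit as a weak solution via a Lax--Wendroff argument, upgrade ``weak'' to ``entropy'' through a discrete entropy inequality, and finally invoke uniqueness of the entropy solution to promote subsequential convergence to convergence of the full family. A preliminary observation is that ordinary pointwise consistency is already encoded in hypothesis (A): setting $W_{i+\ell}=\bar w$ for all $-r\le \ell\le s$ in \eqref{eq:CL_lipschitz} forces $P(\bar w,\ldots,\bar w)=p(\bar w)$, so constant grid states are stationary under \eqref{eq:CL_discrete_ex}.

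First I would establish the two stability estimates that drive the compactness step. Since constants are preserved and the update $W_{\cdot,j}\mapsto W_{\cdot,j+1}$ is monotone by (B), a discrete maximum principle gives $\inf_i W_{i,0}\le W_{i,j}\le \sup_i W_{i,0}$, hence an $L^\infty$ bound on $W_k$ uniform in $k$. Because the scheme is translation invariant and monotone, applying (B) to the shifted sequence $\{W_{i+1,j}\}_i$ and comparing it against $\{W_{i,j}\}_i$ yields the total-variation-diminishing property $\mathrm{TV}(W_{\cdot,j+1})\le \mathrm{TV}(W_{\cdot,j})$; feeding this back into \eqref{eq:CL_discrete_ex} also gives $L^1$ equicontinuity in time. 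With bounded-variation, bounded initial data (the relevant case here, since $u_T=\partial_x v_T$ is $\calC^1$ under Assumption~\ref{assump:HJB}), these bounds and Helly's selection theorem (equivalently Kolmogorov--Riesz compactness) produce a subsequence $W_{k'}\to w$ in $L^1_{\mathrm{loc}}$ and a.e.

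Then I would identify the limit. The conservative form of \eqref{eq:CL_discrete_ex}, together with the Lipschitz consistency (A), permits the standard Lax--Wendroff computation: multiply by a smooth test function, sum by parts, and pass to the limit to conclude that $w$ is a weak solution of \eqref{eq:CL_ex}, with the correct initial datum supplied by hypothesis (C). To single out the entropy solution I would exploit the fact central to monotone-scheme theory, namely that a monotone, consistent scheme dissipates every Kruzhkov entropy $|W-c|$: one derives a discrete cell entropy inequality with a numerical entropy flux built from $P$, and passing to the limit shows $w$ satisfies the Kruzhkov entropy condition, so $w$ is \emph{the} entropy solution. Finally, since the entropy solution of \eqref{eq:CL_ex} is unique, every subsequential limit equals it, and the usual ``every subsequence has a further subsequence converging to the same limit'' argument upgrades this to $\lim_{k\to\infty}\|W_k-w\|_{L^1}=0$.

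The main obstacle is the derivation and limit passage of the discrete entropy inequality: showing that monotonicity (B) plus Lipschitz consistency (A) force the scheme to dissipate \emph{every} Kruzhkov entropy is the delicate point, and it is exactly what separates convergence to the physically correct entropy solution from convergence to a mere weak solution. By contrast, once the $L^\infty$ and TVD bounds are in hand, the compactness extraction and the Lax--Wendroff identification are comparatively routine.
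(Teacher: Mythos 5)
Your proposal is correct and follows essentially the same route as the paper's proof: the paper's appendix assembles exactly these ingredients, namely TV-stability of monotone schemes (Proposition~\ref{lmm:monotone-btv}), the Lax--Wendroff identification of subsequential limits as weak solutions (Proposition~\ref{lmm:LW2}), and Harten's result that monotone, Lipschitz-consistent schemes select the entropy solution (Proposition~\ref{lmm:Harten}). The only differences are that you sketch proofs of these cited ingredients (discrete maximum principle, TVD via monotonicity, discrete Kruzhkov entropy inequality) rather than quoting them, and you make explicit the final step, uniqueness of the entropy solution plus the subsequence-of-a-subsequence argument, which the paper's sketch leaves implicit.
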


\begin{remark}
  While this lemma is essentially the same as Theorem 1 of \cite{Crandall1980Monotone}, our assumptions are slightly different from theirs.
  More specifically, \cite{Crandall1980Monotone} imposes the following initial condition:
  \begin{align}
     & W_{i,0} = \frac{1}{\Delta x}\int_{-\frac{\Delta x}{2}}^{\frac{\Delta x}{2}} w (x_{i} + y,0) \rmd y
  \end{align}
  instead of the condition (C).
  Since this condition is for satisfying the initial value consistency \eqref{eq:initial_lax-wendroff} in the Lax-Wendroff Theorem~\cite{Lax1960Systems}, it is sufficient if \eqref{eq:initial_lax-wendroff} holds directly.
  See also chapter 12 of \cite{LeVeque1992Numerical} for details.
\end{remark}

The variables $p$ and $P$ in \eqref{eq:CL_ex} and \eqref{eq:CL_discrete_ex} are called the \emph{flux function} and \emph{numerical flux function}, respectively.
Using Lemma~\ref{lmm:CL_convergence}, we give a proof of Proposition~\ref{prop:convergence_derivative}.

\begin{proof}[Proof of Proposition \ref{prop:convergence_derivative}]
  The flux function of \eqref{eq:CL} is
  \begin{align}
    p(u)=f(a_{i,j}^*) u+ g(a_{i,j}^*),
  \end{align}
  and the numerical flux functions in \eqref{eq:CL_discrete_1} and \eqref{eq:CL_discrete_2} are
  \begin{align}
    P(U_{i-1,j},U_{i,j})                & = f^{+}(A_{i,j}^{*}) U_{i,j} + f^{-}(A_{i,j}^{*}) U_{i-1,j}+ g(A_{i,j}^{*}),          \\
    \hat P(\hat U_{i,j},\hat U_{i+1,j}) & =f^{+}(A_{i,j}^{*}) \hat U_{i+1,j} + f^{-}(A_{i,j}^{*}) \hat U_{i,j}+ g(A_{i,j}^{*}),
  \end{align}
  respectively.
  Here, rather than the initial value problem in Lemma~\ref{lmm:CL_convergence}, the terminal value problem is considered, so we need to define these functions with the time variable redefined as $s=T-t$.
  As a result, the signs of the flux function and the numerical flux function are opposite to those of the right-hand sides of \eqref{eq:CL}, \eqref{eq:CL_discrete_1}, and \eqref{eq:CL_discrete_2}.
  For these functions, we check the three conditions in Lemma~\ref{lmm:CL_convergence}.
  First, we check the conditions for \eqref{eq:CL_discrete_1}.
  \begin{enumerate}
    \renewcommand{\labelenumi}{(\Alph{enumi})}
    \renewcommand{\theenumi}{\Alph{enumi}}
    \item Lipschitz continuity: The following holds:
          \begin{align}
            \begin{split}
               & P(U_{i-1,j},U_{i,j})-p(u)
              =f^{+}(A_{i,j}^{*}) U_{i,j} + f^{-}(A_{i,j}^{*}) U_{i-1,j}+ g(A_{i,j}^{*})           \\
               & \quad \quad -\left\{f^{+}(a_{i,j}^*) u + f^{-}(a_{i,j}^*) u+ g(a_{i,j}^*)\right\} \\
               & \quad\le f^{+}(a_{i,j}^*) (U_{i,j}-u) + f^{-}(a_{i,j}^*) (U_{i-1,j}-u)            \\
               & \quad\le  \|f\|_{L^\infty}\max_{k\in \{i,i-1\}}|U_{k,j}-u|,
            \end{split}
          \end{align}
          where in the first inequality, we have replaced the minimizer $A^*$ of the numerical flux function with the minimizer $a^*$ of the flux function.
          In the second inequality, we used Assumption \ref{assmp:HJB}-2 that $f(\cdot, a)$ is of class $\calC^1(\Omega)$ (i.e., bounded in $\Omega$).
          Similarly, the reverse evaluation is also valid:
          \begin{align}
            \begin{split}
               & p(u) - P(U_{i-1,j},U_{i,j})
              = f^{+}(a_{i,j}^*) u + f^{-}(a_{i,j}^*) u+ g(a_{i,j}^*)                                                 \\
               & \quad\quad -\left\{f^{+}(A_{i,j}^{*}) U_{i,j} + f^{-}(A_{i,j}^{*}) U_{i-1,j}+ g(A_{i,j}^{*})\right\} \\
               & \quad\le f^{+}(A_{i,j}^{*}) (u-U_{i,j}) + f^{-}(A_{i,j}^{*}) (u-U_{i-1,j})                           \\
               & \quad\le  \|f\|_{L^\infty}\max_{k\in \{i,i-1\}}|U_{k,j}-u|.
            \end{split}
          \end{align}
          From the two estimates, \eqref{eq:CL_lipschitz} is established.
    \item Monotonicity:
          Let $A^U$ and $A^\tilU$ be any minimizers of \eqref{eq:upwind_HJB_input} for $U$ and $\tilU$, respectively.
          Then, the following inequality holds:
          \begin{align}
            \begin{split}
               & U_{i,j-1}-\tilde U_{i,j-1} = U_{i,j} - \tilde U_{i,j}                                                                                                       \\
               & \quad + \alpha\left\{\left(f^{+}(A_{i+1,j}^{U}) U_{i+1,j} + f^{-}(A_{i+1,j}^{U}) U_{i,j}+ g(A_{i+1,j}^{U})\right)\right.                                    \\
               & \qquad -\left.\left(f^{+}(A_{i,j}^{U}) U_{i,j} + f^{-}(A_{i,j}^{U}) U_{i-1,j}+ g(A_{i,j}^{U})\right)\right\}                                                \\
               & \quad - \alpha\left\{\left(f^{+}(A_{i+1,j}^{\tilde U}) \tilde U_{i+1,j} + f^{-}(A_{i+1,j}^{\tilde U}) \tilde U_{i,j}+ g(A_{i+1,j}^{\tilde U})\right)\right. \\
               & \qquad -\left.\left(f^{+}(A_{i,j}^{\tilde U}) \tilde U_{i,j} + f^{-}(A_{i,j}^{\tilde U}) \tilde U_{i-1,j}+ g(A_{i,j}^{\tilde U})\right)\right\}             \\
               & \ge U_{i,j} - \tilde U_{i,j}                                                                                                                                \\
               & \quad + \alpha\left\{\left(f^{+}(A_{i+1,j}^{U}) (U_{i+1,j}-\tilde U_{i+1,j}) + f^{-}(A_{i+1,j}^{U}) (U_{i,j}-\tilde U_{i,j})\right)\right.                  \\
               & \qquad -\left.\left(f^{+}(A_{i,j}^{\tilde U}) (U_{i,j}-\tilde U_{i,j}) + f^{-}(A_{i,j}^{\tilde U}) (U_{i-1,j}-\tilde U_{i-1,j})\right)\right\}              \\
               & =\left\{1-\alpha|f(A_{i+1,j}^{U})|-\alpha|f(A_{i,j}^{\tilde U})| \right\}(U_{i,j} - \tilde U_{i,j})                                                         \\
               & \qquad + \alpha |f(A_{i+1,j}^{U})| (U_{i+1,j}-\tilde U_{i+1,j}) + \alpha |f(A_{i,j}^{\tilde U})| (U_{i-1,j}-\tilde U_{i-1,j}),
            \end{split}
          \end{align}
          Here, the minimizer $A^U$ for $U$ and the minimizer $A^\tilU$ for $\tilU$ are exchanged so that the inequality is satisfied.
          From the modified CFL condition in \eqref{eq:modified_CFL}, the following holds:
          \begin{align}
            1-\alpha|f(A_{i+1,j}^{U})|-\alpha|f(A_{i,j}^{\tilde U})|\ge 0.
          \end{align}
          Finally, the assumption $U_{i,j}-\tilde U_{i,j}\ge 0$ guarantees $U_{i,j-1}-\tilde U_{i,j-1}\ge 0$.
    \item Initial value consistency:
          This is clearly established by $v_T\in\calC^2(\Omega)$ in Assumption \ref{assmp:deriv}-2, because $u_T\coloneqq\partial_x v_T \in\calC^1(\Omega)$ immediately implies \eqref{eq:initial_lax-wendroff}.
  \end{enumerate}
  For \eqref{eq:CL_discrete_2}, the three conditions are checked in the same way.
\end{proof}

\section*{Data Availability}
The data that support the findings of this study are available from the corresponding author upon reasonable request.

\section*{Funding}
The authors did not receive support from any organization for the submitted work.

\section*{Acknowledgments}
The authors thank Prof. Nao Hamamuki for valuable comments on relations between optimal inputs and minimizers of the cost function.
Generative-AI tools were used to assist with manuscript proofreading.


\end{document}